\numberwithin{equation}{section}
\setlist[itemize]{itemsep=2pt, topsep=3pt, parsep=0pt, partopsep=0pt}
\setlist[enumerate]{itemsep=2pt, topsep=3pt, parsep=0pt, partopsep=0pt}
\definecolor{Red}{cmyk}{0,1,1,0}
\definecolor{Purple}{cmyk}{79,100,0,0}
\declaretheorem[numberwithin=section]{theorem}
\declaretheorem[sibling=theorem]{lemma}
\declaretheorem[sibling=theorem]{remark}
\crefname{theorem}{Theorem}{Theorems}
\crefname{example}{Example}{Examples}
\crefname{lemma}{Lemma}{Lemmas}
\crefname{proposition}{Proposition}{Propositions}
\crefname{definition}{Definition}{Definitions}
\crefname{remark}{Remark}{Remarks}
\crefname{conjecture}{Conjecture}{Conjectures}
\crefname{openquestion}{Open Question}{Open Questions}
\crefname{equation}{}{}
\Crefname{equation}{}{}
\newcommand{\ind}[1]{{\mathds{1}_{#1}}}
\newcommand{\bra}[1]{{\langle #1 \rangle}}
\newcommand{\Bbra}[1]{{\big\langle #1 \big\rangle}}
\newcommand{\ZZ}{{\mathbb{Z}}}
\newcommand{\RR}{{\mathbb{R}}}
\newcommand{\II}{\mathcal{I}}
\newcommand{\NN}{{\mathbb{N}}}
\newcommand{\PP}{{\mathbb{P}}}
\newcommand{\EE}{{\mathbb{E}}}
\newcommand{\bx}{{\mathbf{x}}}
\newcommand{\ba}{{\mathbf{a}}}
\newcommand{\bla}{\boldsymbol{\lambda}}
\newcommand{\oo}{\mathsf{o}}
\newcommand{\Z}{\mathbb{Z}}
\renewcommand{\P}{\mathbb{P}}
\title{{I}mry-{M}a phenomenon for the hard-core model on $\mathbb{Z}^{2}$}
\author{Irene Ayuso Ventura$^\ast$} 
\thanks{$^\ast$Department of Mathematics, Durham University, Durham, UK} 
\author{ 
Leandro Chiarini$^\dagger$} 
\thanks{$^\dagger$Instituto de Matemática e Estatística, Universidade de São Paulo, São Paulo, Brazil} 
\author{Tyler Helmuth$^\ast$}
\author{Ellen Powell$^\ast$}
\begin{document}

\maketitle

\vspace{-1cm}
\begin{abstract}
	The \emph{Imry-Ma phenomenon} refers to the dramatic effect that
	disorder can have on first-order phase transitions for
	two-dimensional spin systems. The most famous example is the absence
	of a phase transition for the two-dimensional random-field Ising
	model. This paper establishes that a similar phenomena takes place
	for the hard-core model, a discrete model of crystallization:
	arbitrarily weak disorder prevents the formation of a crystal. Our
	proof of this behaviour is an adaptation of the Aizenman-Wehr
	argument for the Imry-Ma phenomenon, with the use of internal (spin
	space) symmetries for spin systems being replaced by the use spatial symmetries.
\end{abstract}

\section{Introduction}

The \emph{hard-core model} is particle system in which the vertices
$V$ of a graph $G$ are either occupied or not. The \emph{hard-core
	constraint} is that the occupied vertices form an \emph{independent
	set}, i.e., no two neighbouring vertices may be simultaneously
occupied.  Given an activity $\lambda\geq 0$, one obtains a
probability law by declaring the probability of each independent set
$I\subset V$ to be proportional to $\lambda^{|I|}$.

In this paper, we focus on the case that $G$ is $\Z^{2}$.  Physically,
the hard-core model in this setting serves as a model for the
adsorption of atoms onto a crystal surface, see,
e.g.,~\cite{Taylor1985}.  The adsorbed atoms are represented by
occupied vertices. The independent set constraint is an approximation
of the adsorbing potential.  A well-known result of Dobrushin is that
the hard-core model has a phase transition on
$\Z^{2}$~\cite{dobrushin1968problem}.  When $\lambda$ is small, the
adsorbed atoms are disordered, and their spatial correlations decay
exponentially fast.  When $\lambda$ is large, however, they inherit
the periodic order of the crystal substrate --- atoms preferentially
occupy either the even or odd sub-lattices of $\Z^{2}$, and spatial
correlations do not decay. This parity-breaking phase transition is
also encoded in the set of (infinite-volume) Gibbs measures.  There is a
unique Gibbs measure when $\lambda$ is sufficiently small, but
uniqueness fails when $\lambda$ is sufficiently large.  Dobrushin's
proof of non-uniqueness is based on a Peierls argument: he shows that
given a box $\Lambda\subset \Z^{2}$, different choices of boundary
conditions on $\Lambda$ can have an impact on the marginal law of
occupation at the origin, no matter how large $\Lambda$ is.

In this paper we are interested in what happens if the crystal
substrate has some defects. Given $p\in [0,1]$, let $\P_{p}$ denote
the law of Bernoulli site percolation on $\Z^{2}$. That is,
$\P_{p}[X_{v}=1]=p$, $\P_{p}[X_{v}=0]=1-p$, and $X_{v}=1$ indicates
that the vertex $v$ is present. This determines a random subgraph
$G_{p}$ of $\Z^{2}$ by deleting all absent vertices (and any
edge containing an absent vertex).  If $p\approx 1$ only a small
density of vertices are deleted, and intuitively $G_{p}$ is rather
similar to $\Z^{2}$. Perhaps surprisingly, then, our main result is
that there is no phase transition for the hard-core model on $G_{p}$
for \emph{any} $p<1$.
More formally, let $\mathcal{G}_{\lambda}(G)$ denote the set of
infinite-volume Gibbs measures for the hard-core model with activity
$\lambda$ on an infinite graph $G$.
\begin{theorem}
	\label{thm:main}
	Fix $\lambda\geq 0$, $p\in [0,1)$, and let $G_{p}$ be the random
	subgraph of $\Z^{2}$ determined by Bernoulli site percolation on
	$\Z^{2}$. Then $\mathcal{G}_{\lambda}(G_{p})$ is almost surely a
	singleton set.
\end{theorem}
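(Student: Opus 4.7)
The plan is to adapt the Aizenman-Wehr proof of the absence of a phase transition in the two-dimensional random-field Ising model. In our setting, the internal $\pm$-spin symmetry used in that argument is replaced by the spatial translation $\tau: v \mapsto v + (1,0)$ of $\Z^2$, which exchanges the even and odd sub-lattices and hence, on the pure lattice, interchanges the two crystalline phases of the hard-core model. The random site variables $(X_v)_{v\in\Z^2}$ play the role of the random field, coupling to the parity order parameter $\rho^{\e} - \rho^{\o}$, where $\rho^{\e/\o}$ denotes the density of particles on the even/odd sub-lattice.

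Suppose for contradiction that $|\mathcal{G}_\lambda(G_p)|\ge 2$ on an event of positive $\P_p$-probability; translation-ergodicity of $\P_p$ upgrades this to an almost sure statement. The first step is to invoke the Aizenman-Wehr metastate formalism to select, measurably in $X$ and covariantly under translations, two distinct random Gibbs states $\mu^{\e}[X]$ and $\mu^{\o}[X]$ that are exchanged by the joint action of $\tau$ on the lattice and on the disorder, and which in the $p\to 1$ limit reduce to Dobrushin's two phases. For boxes $\Lambda_L = [-L,L]^2$ equipped with boundary conditions $\eta \in \{\e, \o\}$ favouring the corresponding sub-lattice, introduce the finite-volume free energy difference
\[\Delta F_L[X] := \log\frac{Z_L^{\e}[X]}{Z_L^{\o}[X]}.\]
The core of the argument is then a comparison of two competing estimates on $\Delta F_L$. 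The bulk lower bound exploits that $\partial_{X_v}\log Z_L^{\eta}[X]$ is proportional to the conditional occupation probability of $v$ under the corresponding finite-volume Gibbs measure, so that for $v$ deep inside $\Lambda_L$ the derivative $\partial_{X_v}\Delta F_L$ is concentrated near a parity-dependent constant of order one. Independence of the $(X_v)$ then forces $\operatorname{Var}(\Delta F_L) \gtrsim L^2$, hence typical fluctuations of order $L$. The surface upper bound comes from boundary-condition estimates for the hard-core model, giving $|\Delta F_L[X]-\Delta F_L[\tau X]|\lesssim L$ after invoking the approximate antisymmetry $\Delta F_L[\tau X]\approx -\Delta F_L[X]$ together with the tightness of $\Delta F_L$ guaranteed by the metastate covariance.

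The main obstacle is that in dimension two these two bounds are of the same order $L$, so the contradiction does not fall out immediately and requires the full quantitative Aizenman-Wehr analysis. The idea is to compare $\Delta F_L$ across a sequence of nested dyadic boxes, using a martingale-like decomposition of the bulk fluctuations with respect to the filtration generated by concentric annuli of disorder, and to show that the Gaussian-type bulk contribution strictly dominates the essentially deterministic-given-boundary surface correction at this borderline dimension. The most delicate step in adapting the argument to the hard-core setting is that $\tau$ does not fix the disorder, so the symmetry-based cancellations must compare $\Delta F_L[X]$ with $\Delta F_L[\tau X]$ rather than with $-\Delta F_L[X]$, introducing an extra disorder shift that has to be tracked in the variance computation. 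Additional care will be needed to handle vertices with $X_v=0$, which can disconnect parts of the percolation graph $G_p$ from the box boundary and complicate both the surface estimate and the parity-based identification of the two selected phases.
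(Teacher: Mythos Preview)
Your overall strategy --- an Aizenman--Wehr argument in which the internal $\pm$ spin-flip of the Ising model is replaced by a parity-exchanging spatial symmetry of $\Z^2$ --- is exactly the paper's approach. But the proposal is a plan rather than a proof, and the difficulty you yourself flag (``the symmetry-based cancellations must compare $\Delta F_L[X]$ with $\Delta F_L[\tau X]$ \ldots\ introducing an extra disorder shift that has to be tracked'') is never resolved. This is not a minor technicality: your quantity $\Delta F_L=\log(Z_L^e/Z_L^o)$ does not obviously admit the \emph{pointwise} deterministic bound $|\Delta F_L|\le c|\partial\Lambda_L|$ that the contradiction requires; you only sketch a bound on $|\Delta F_L[X]-\Delta F_L[\tau X]|$, which is a different statement, and the appeal to ``tightness of $\Delta F_L$ guaranteed by the metastate covariance'' does not supply what is missing. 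The paper's resolution is to work not with $\log(Z^e/Z^o)$ but with $F_{\Lambda}(X)=\mathbb{E}[G^e_\Lambda(X)-G^o_\Lambda(X)\mid (X_v)_{v\in\Lambda}]$, where $G^\tau_\Lambda$ is built from an \emph{infinite-volume} expectation with the disorder switched off inside $\Lambda$. The conditional expectation over the outside disorder is the key: one applies a \emph{reflection} (not a translation) to the disorder outside a slightly enlarged box $\Lambda_{j+1}$, which swaps the boundary-condition parity, has the same law under $\mathbb{P}$ (so the conditional expectation absorbs the shift), and produces errors only in the annulus $\Lambda_{j+1}\setminus\Lambda_j$. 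That is how the ``extra disorder shift'' is actually handled.

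Two further points. First, the metastate formalism is unnecessary here: FKG for the hard-core model on bipartite graphs gives that $\mu^e$ and $\mu^o$ exist as monotone limits and bracket all Gibbs states, so uniqueness reduces to $\mu^e(v\in I)=\mu^o(v\in I)$ for all $v$ --- this replaces your measurable selection of phases entirely and also supplies the monotonicity of $\partial_{\log x_v}F_\Lambda$ needed at the end. Second, the fluctuation input is not a bare variance estimate (which, as you note, is only of the same order as the surface bound) but a Laplace-transform lower bound $\liminf\mathbb{E}[\exp(tF_\Lambda/\sqrt{|\Lambda|})]\ge\exp(t^2b^2/2)$, obtained from a martingale decomposition of $F_\Lambda$ along the \emph{lexicographic} ordering of sites rather than concentric annuli; the quantity $b^2$ is bounded below in terms of $\mathbb{E}[\mathbb{E}[F_\Lambda\mid X_\oo]^2]$, and combining $b=0$ with the monotonicity and the explicit formula for $\partial_{\log x_\oo}F_\Lambda$ forces the occupation-probability difference to vanish $\mathbb{P}$-a.s.
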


Theorem~\ref{thm:main} is a special case of our main result.
Our more general context allows for random, site-dependent activities
$\bla=\{\lambda_v\}_{v\in V}=\{\lambda X_{v}\}_{v\in V}$, where
$\lambda\geq 0$ is a fixed scaling and the $X_{v}$ are random.  In
this setting the probability of an independent set $I$ is proportional
to $\prod_{v\in I}\lambda_{v}$.

\begin{theorem}
	\label{thm:main-full}
	Fix $\lambda\geq 0$, and suppose the random variables
	$(X_{v})_{v \in \ZZ^2}$ are non-negative, i.i.d., non-constant and
	have finite $(2+\varepsilon)$\textsuperscript{th} moment for some
	$\varepsilon>0$. Then
	$\mathcal{G}_{{\boldsymbol \lambda}}(\mathbb{Z}^2)$ is almost surely
	a singleton~set.
\end{theorem}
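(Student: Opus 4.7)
We follow the Aizenman--Wehr strategy, in which the internal $\Z_2$ spin-flip symmetry used in the random-field Ising setting is replaced by the spatial unit translation $\sigma$ of $\Z^2$ that exchanges the two sub-lattices. Standard cluster-expansion or disagreement-percolation arguments handle the small-$\lambda$ regime, so we restrict attention to large $\lambda$ and argue by contradiction: assume that $|\mathcal{G}_{\bla}(\Z^2)| > 1$ on an event of positive $\P$-probability, which by translation-ergodicity of the disorder is then of full probability. Construct two $\P$-measurable Gibbs measures $\mu^{\mathrm{e}}(\omega)$ and $\mu^{\mathrm{o}}(\omega)$ as weak subsequential limits of finite-volume Gibbs measures on boxes $\Lambda_L$ with even- and odd-sublattice boundary conditions. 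These satisfy the spatial-covariance relation $\mu^{\mathrm{o}}(\omega) = \sigma_{\ast}\, \mu^{\mathrm{e}}(\sigma^{-1}\omega)$, and under our assumption they differ almost surely.

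Next, analyse the finite-volume free-energy difference
\[
    \Delta_L(\omega) := \log Z_L^{\mathrm{e}}(\omega) - \log Z_L^{\mathrm{o}}(\omega).
\]
The expectation $\EE[\Delta_L]$ is a deterministic surface/boundary-tension contribution of order $L$. The Efron--Stein variance of $\Delta_L$ is driven by the discrete derivative $\partial_{X_v} \log Z_L^{\mathrm{e}} - \partial_{X_v} \log Z_L^{\mathrm{o}} = X_v^{-1}\big(\mu_L^{\mathrm{e}}[v \in I] - \mu_L^{\mathrm{o}}[v \in I]\big)$; in the parity-broken regime this occupation-probability difference is $\Theta(1)$ for typical bulk $v$, so $\mathrm{Var}(\Delta_L) = \Theta(L^2)$. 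Thus the fluctuations of $\Delta_L$ live on the same scale $L$ as the surface mean, which is the critical Imry--Ma balance in $d = 2$. A Lindeberg central limit theorem --- whose Lyapunov condition is supplied by the $(2+\varepsilon)$-moment hypothesis --- then shows that $\Delta_L/L$ converges in distribution along subsequences to a non-degenerate Gaussian, so both $\P[\Delta_L > 0]$ and $\P[\Delta_L < 0]$ remain bounded away from $0$ as $L \to \infty$.

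To close the argument, construct a translation-covariant metastate $\kappa_\omega$ on $\mathcal{G}_{\bla}(\omega)$, for instance as a subsequential limit of random-boundary-condition empirical measures in $\Lambda_L$. The previous step forces $\kappa_\omega$ to place strictly positive mass on each of $\mu^{\mathrm{e}}$ and $\mu^{\mathrm{o}}$; but the spatial-covariance relation under $\sigma$ together with a Kolmogorov $0$--$1$ law applied to the tail $\sigma$-algebra of $\{X_v\}_{v\in\Z^2}$ forces these weights to be deterministic, $\{0,1\}$-valued, and equal by the $\sigma$-symmetry, a contradiction. The main obstacle will be the CLT step: one must establish that $\mu_L^{\mathrm{e}}[v \in I] - \mu_L^{\mathrm{o}}[v \in I]$ is quantitatively $\Theta(1)$ for most bulk $v$ uniformly in $L$ (a sharp rigidity form of the assumed phase coexistence), and one must control the Efron--Stein increments when $X_v$ is near $0$ using only the $(2+\varepsilon)$-moment hypothesis, rather than any pointwise lower bound on the disorder.
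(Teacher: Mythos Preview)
Your closing metastate argument does not work. The Kolmogorov $0$--$1$ law on the tail $\sigma$-field of i.i.d.\ disorder forces tail-measurable quantities to be almost surely \emph{constant}, not $\{0,1\}$-valued; the outcome $\kappa_\omega(\mu^{\mathrm{e}})=\kappa_\omega(\mu^{\mathrm{o}})=\tfrac12$ is fully consistent with both the $0$--$1$ law and your $\sigma$-covariance, so no contradiction results. It is also not clear that metastate weights are tail-measurable, nor is the passage from ``$\PP[\Delta_L>0]$ bounded away from $0$'' to ``$\kappa_\omega$ charges $\mu^{\mathrm e}$ positively'' substantiated. The CLT step you flag is likewise a real gap: $\Delta_L$ is not a sum of independent increments, so you would need a martingale CLT with Lindeberg verification in the \emph{random} environment, and the non-degeneracy input --- that the finite-volume occupation difference is $\Theta(1)$ uniformly in $L$ for random activities --- is itself a rigidity statement in disorder that is not available a priori. (Incidentally, the same reflection that exchanges even and odd boundary conditions and preserves the i.i.d.\ law gives $\EE[\Delta_L]=0$ exactly, not merely $O(L)$.)

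The paper avoids all of this: it neither argues by contradiction from assumed coexistence nor invokes a metastate or a CLT. It works with the conditional quantity $F_{\Lambda}(X):=\EE[G^{e}_{\Lambda}(X)-G^{o}_{\Lambda}(X)\mid (X_v)_{v\in\Lambda}]$, where $G^\tau_\Lambda$ is an infinite-volume generating function, and proves two lemmas. First, reflecting the field \emph{outside} $\Lambda_j$ across the line $x=1/2$ swaps the boundary conditions at the cost of an annulus of width one; averaging over the i.i.d.\ exterior disorder gives the \emph{deterministic} bound $|F_{\Lambda_j}|\le c_\lambda|\partial\Lambda_j|$. Second, a martingale decomposition plus a soft lower bound on conditional Laplace transforms yields $\liminf_{j}\EE[\exp(tF_{\Lambda_j}/\sqrt{|\Lambda_j|})]\ge \exp(t^2b^2/2)$, with $b^2\ge\tfrac12\EE[\EE[F_\Lambda\mid X_{\oo}]^2]+\tfrac12\EE[\EE[F_\Lambda\mid X_{\oo'}]^2]$. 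Sending $t\to\infty$ forces $b=0$; differentiating the identity $\EE[F_\Lambda\mid X_{\oo}=x]\equiv 0$ in $\log x$ then gives $\EE[\mu^{e}_{\ZZ^2}(\oo\in I)-\mu^{o}_{\ZZ^2}(\oo\in I)]=0$, and monotonicity together with translation covariance finish. The quantitative rigidity you identify as the main obstacle is never needed.
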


Theorems~\ref{thm:main} and~\ref{thm:main-full} may be surprising at first glance, but they are in fact relatively intuitive.
The fundamental observation is one made by Imry and Ma in the context of the random-field Ising model~\cite[Section~7.1]{Bovier06}.
In the setting of Theorem~\ref{thm:main}, their observation is as follows.
Let $\Lambda$ be a finite box in $\Z^{2}$ with even side length, and let $N_\Lambda$ be the difference in the number of even and odd sites removed by the percolation process.
Since we are considering Bernoulli site percolation and $\Lambda$ has even side length, $N_\Lambda$ is mean zero.
Moreover, $N_{\Lambda}$ has (approximately) Gaussian fluctuations of size $\sqrt{|\Lambda|}$.
Since for $\lambda>0$ a positive fraction of vertices are occupied by particles, we expect the fluctuations in $N_{\Lambda}$ to translate into a shift in the size of the partition function by an amount exponential in $\sqrt{|\Lambda|}$ times an \emph{unbounded} random constant; the constant is unbounded since a Gaussian random variable is unbounded.
This shift preferentially puts weight on predominantly even or predominantly odd configurations (which of these two depends on the sign of $N_\Lambda$).
In contrast, the effect of imposing boundary conditions on $\Lambda$ will only have an effect of size $O(\sqrt{|\Lambda|})$, with the implicit constant uniform in $\Lambda$.
Thus there is a positive probability that any effect of boundary conditions will be negligible compared to the effect of the percolation environment in $\Lambda$.
This suggests there will be a unique infinite-volume measure, and that there are no long-range correlations.

While the above heuristic is relatively convincing, it is far from a proof.
A mathematically complete version of this picture for spin systems was developed by Aizenman and Wehr~\cite{AizenmanWehr1990}.
For a textbook exposition, see~\cite{Bovier06}.
Our proof of Theorem~\ref{thm:main-full} adapts the Aizenman-Wehr argument.
The need for adaptation is due to the fact that we are considering a particle system with hard constraints and spatial symmetries, as opposed to a spin system with internal symmetries.

Some elaboration on the last sentence may be useful. That is, one may
wonder why Theorem~\ref{thm:main-full} does not follow from existing
Aizenman-Wehr type arguments. To explain this, first note that the
hard constraint that no two neighbouring sites are simultaneously
occupied prevents any immediate application of the results
in~\cite{AizenmanWehr1990}. One might try to circumvent this by
looking at the marginal distribution of particles on even vertices,
but this leads to issues with translation invariance. Recent
quantitative work on the Imry-Ma phenomenon has done away with the
assumption of starting with a translation invariant
Hamiltonian~\cite{dario2024quantitative}, but this work only considers
Gaussian disorder -- in particular, one would not be able to obtain
Theorem~\ref{thm:main} by directly
applying results from~\cite{dario2024quantitative}.

It is natural to envision other routes to Theorem~\ref{thm:main-full},
e.g., by generalising~\cite{AizenmanWehr1990} to allow for hard
constraints and/or by relaxing their translation invariance
hypothesis. Alternatively, one might aim to
generalise~\cite{dario2024quantitative} to allow for non-Gaussian
disorder. We have opted to avoid generality in favour of a
comparatively brief and simple argument that highlights the phenomenon
of interest: that disorder can destroy the spatial symmetry breaking
phase transition for two-dimensional particle systems.

\subsection{Future Directions and Broader Context}

As suggested by the previous section, our proof of
Theorem~\ref{thm:main-full} follows established lines. The
Aizenman-Wehr method is, however, somewhat delicate, and it is rather
fortunate that it can be adapted to establish our main results. To
highlight this point, we remark that it does \emph{not} appear to be
straightforward to establish the absence of a phase transition for the
hard-core model on a \emph{bond} percolated version of $\Z^{2}$. A
more robust understanding of the effect of mean-zero disorder on phase
transitions for two-dimensional particle systems seems desirable.

The random-field Ising model, and random-field spin systems more
generally, have recently experienced a renaissance. There has been
spectacular quantitative progress regarding the decay of correlations
in the random-field Ising model, first at zero
temperature~\cite{ding2021exponential} and subsequently at positive
temperatures~\cite{ding2021exponential,aizenman2020exponential}. Related
references
include~\cite{ding2023correlation,dario2024quantitative}. Extending
this quantitative understanding to the context of the hard-core model
is a natural question, particularly in light of the fact that the
hard-core model on $\Z^{2}$ serves as a reasonable model for
real-world surface adsorption, where some disorder in the crystal
substrate must be present~\cite{Taylor1985}. Understanding the effect
of more general disorder (e.g., disorder that affects the
bipartite structure of the underlying graph) would also be of
interest.

Given the analogy with the random-field Ising model, it is natural to
expect that the hard-core model retains its phase transition on
$\Z^{d}$, $d\geq 3$. It seems likely that the arguments
of~\cite{ding2024long,ding2024long2} can be adapted to show
this. Another approach would be to verify the abstract conditions
developed in~\cite{chen2025stabilitylongrangeorderdisordered}.

The study of the hard-core model on general bipartite graphs is a
question of significant interest in theoretical computer
science~\cite{dyer2004relative,CannonHelmuthPerkins2024}. Roughly
speaking, the main question is whether or not one can efficiently
generate approximately correct samples from the hard-core model on
general bipartite graphs; for further details and references
see~\cite[Section~1.5]{CannonHelmuthPerkins2024}. While
Theorem~\ref{thm:main-full} has no direct bearing on this question, it
suggests that devising a general-purpose algorithm might be a subtle
matter: the samples produced for $\Z^{2}$ and a sparsely percolated
version of $\Z^{2}$ must be rather different when $\lambda$ is large,
despite the graphs being rather similar. A similar algorithmic
challenge is presented by the Ising model in general external fields,
see~\cite{alaoui2023fast,helmuth2023approximation}. For a formal
connection, see~\cite{goldberg2007complexity}.

There has recently also been interest in the
hard-core model on disordered graphs on
percolated hypercubes and expander graphs. This was initiated in~\cite{KronenbergSpinka2022}; subsequent work includes~\cite{chowdhury2025decouplingclustersindependentsets,geisler2025countingindependentsetspercolated,jenssen2024refinedgraphcontainerlemma}.

\subsection{Acknowledgements}
We thank Ron Peled for helpful comments which led to improvements on an earlier version of this paper.
This research was partly undertaken during the Trimester Program ``Probabilistic methods in quantum field theory'' at the Hausdorff Institute for Mathematics, funded by the Deutsche Forschungsgemeinschaft: EXC-2047/1, 390685813.
The research of IAV, LC and EP is supported by UKRI Future Leaders Fellowship MR/W008513.

\section{Preliminaries}
\label{sec:background}

\subsection{Notation and basic definitions}
We slightly abuse notation by writing $\mathbb{Z}^{2}$ for the graph
$G=(\mathbb{Z}^2,E(\mathbb{Z}^2))$ where $E(\mathbb{Z}^{2})$ consists
of pairs of vertices $u,v \in \mathbb{Z}^2$ such that $\|u-v\|_1=1$. Recall that $\ZZ^2$
is a bipartite graph: its vertices can be disjointly partitioned into
\textbf{even} vertices $e = \{(x,y) \in \ZZ^2 : x+y \equiv 0 \pmod{2}\} $ and \textbf{odd} vertices
$o= \{(x,y) \in \ZZ^2 : x+y \equiv 1 \pmod{2}\}$.
For any $\Lambda \subset \ZZ^2$, we define
$ \partial \Lambda := \{ u \in \ZZ^2 \setminus \Lambda : \exists\, v
	\in \Lambda \text{ with } \|u-v\|_1=1 \,\}$ to be its
\textbf{(external) boundary}.
We almost exclusively focus on even side-length boxes $\Lambda_j := [-j+1,j]^2$, with $j \in \NN$, as these boxes have useful symmetries.

A set of vertices $I \subseteq \ZZ^2$ is called an \textbf{independent
	set} if no two vertices in $I$ are adjacent, i.e., for all $u, v \in
	I$, $u \not\sim v$. See Figure~\ref{fig:IS}.
For $\Lambda \subseteq \ZZ^2$, we denote by $\II_\Lambda$ the
collection of all independent subsets of $\Lambda$. Each independent set $\tau\in \II_{\Z^2}$ can be used to define
\textbf{boundary conditions} by setting
\[
	\II_{\Lambda}^{\tau} := \{ I \in \II_{\ZZ^2} \, : \, I \cap \Lambda^c= \tau \cap \Lambda^c \} \,,
\]
to be the set of all independent subsets in $\Lambda$ that are compatible with $\tau$.
The most important boundary conditions are the \textbf{even and
	odd boundary conditions}, corresponding
to $\tau=e$ and $\tau=o$ respectively.

Let $\bla = (\lambda_v)_{v \in \ZZ^2}$ with $\lambda_v \geq 0$ for each
site $v \in \ZZ^2$. The parameter $\lambda_{v}$ is the \textbf{activity}
at $v$. The \textbf{hard-core model} on $\Lambda$ with boundary condition
$\tau$ is
\begin{equation}
	\label{eq:model}
	\mu^{\tau}_{\Lambda,\bla}(I)
	:=
	\frac{\ind{I \in \II_{\Lambda}^{\tau} }}{Z^{\tau}_{\Lambda,\bla}}
	\prod_{v \in I\cap\Lambda } \lambda_v
	:= \frac{\ind{I \in
			\II_{\Lambda}^{\tau} }}{Z^{\tau}_{\Lambda,\bla}} \bla^{I\cap\Lambda} \,,
\end{equation}
the final equality by the shorthand
$\bla^{A}:=\prod_{v\in A}\lambda_{v}$.
The \textbf{partition function} $Z^{\tau}_{\Lambda,\bla}:=\sum_{I\in
		\II_{\Lambda}^\tau}{\bla^{I\cap \Lambda}}$
ensures that $\mu^\tau_{\Lambda,\bla}$ is a probability measure.
Expectation with respect to $\mu^{\tau}_{\Lambda,\bla}$ will be
denoted by
\begin{equation}
	\bra{F}_{\Lambda,\bla}^{\tau}:=\mu^{\tau}_{\Lambda,\bla}(F) \,.
\end{equation}
In this work we are primarily interested in the case where the
activities are random variables.
That is, we will consider a family of activities $\bla$ given by
\begin{equation}
	\label{eq:defactivities}
	\lambda_v = \lambda \cdot X_v, \qquad v\in \Z^{2},
\end{equation}
where $\lambda > 0$ and $X:=(X_v)_{v \in  \mathbb{Z}^2}$ is a family of i.i.d.\ non-negative random variables with individual laws $\mathbf{P}$ and joint law $\PP$.

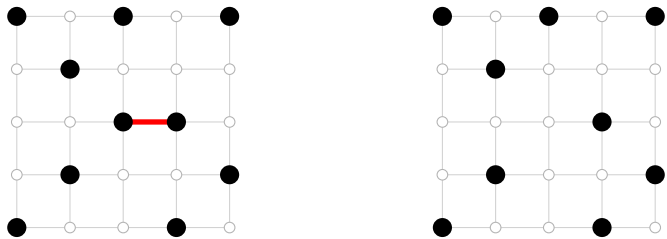
\begin{figure}\centering
	\begin{center}
		\begin{tikzpicture}[scale=0.7]
			\def\rEmpty{0.10} \def\rOcc{0.18}

			\begin{scope}[xshift=0cm]
				\draw[thin, gray!35] (0,0) grid (4,4);

				\draw[red, line width=2pt] (2,2) -- (3,2);

				\foreach \i in {0,...,4}{
						\foreach \j in {0,...,4}{
								\draw[gray!60, fill=white] (\i,\j) circle (\rEmpty);
							}
					}

				\foreach \x/\y in {0/0, 3/0, 1/1, 4/1, 3/2, 1/3, 0/4, 2/4, 4/4}{
						\fill[black] (\x,\y) circle (\rOcc);
					}
				\fill[black] (2,2) circle (\rOcc);
			\end{scope}

			\begin{scope}[xshift=8cm]
				\draw[thin, gray!35] (0,0) grid (4,4);

				\foreach \i in {0,...,4}{
						\foreach \j in {0,...,4}{
								\draw[gray!60, fill=white] (\i,\j) circle (\rEmpty);
							}
					}

				\foreach \x/\y in {0/0, 3/0, 1/1, 4/1, 3/2, 1/3, 0/4, 2/4, 4/4}{
						\fill[black] (\x,\y) circle (\rOcc);
					}
			\end{scope}
		\end{tikzpicture}
	\end{center}   \caption{
		The set of occupied vertices (filled circles) on the left is not
		independent: the two vertices contained in the red edge are
		adjacent. The set on the right is independent.}
	\label{fig:IS}
\end{figure}

\subsection{Infinite-volume Gibbs measures on bipartite graphs}

The hard-core model on bipartite graphs possess
monotonicity properties that simplify the structure of the set of
infinite-volume Gibbs measures.
Given $\bla$,
recall that the set
$\mathcal{G}_{\bla}(\mathbb{Z}^2)$ of Gibbs measures is defined as the
set of measures $\mu$ on
$\mathcal{I}_{\ZZ^2}$ (equipped with the product
$\sigma$-algebra) satisfying the DLR conditions. That is, for any
finite subset $\Lambda\subset \ZZ^2$, the
$\mu$-conditional law of $I|_{\Lambda}$ given
$I|_{\Lambda^c}$ is given by
$\mu^{\tau}_{\Lambda,\bla}$ with
$\tau|_{\Lambda^c}=I|_{\Lambda^c}$ and
$\tau|_{\Lambda}=\emptyset$.  For more details, see, for instance,
\cite[Definition 1.23]{Georgii1988}.

The next theorem characterizes the existence of a phase transition for
the hard-core model on $\mathbb{Z}^2$.
We state the theorem for a deterministic set of activities
$\bla$
and will later apply it for $\lambda_v=\lambda X_v$ with $X$ a fixed
realisation of the random field $X$ under $\mathbb{P}$.

\begin{theorem}
	\label{th:InfiniteVol-PhaseTrans}
	Consider the hard-core model
	on $\ZZ^2$ with activity field $\bla$,
	$0\le \lambda_v<\infty$ for all $v$.
	The even and odd infinite-volume measures
	are well-defined as the (unique) local weak limits of finite-volume measures along any exhausting sequence $\Lambda \uparrow \ZZ^2$:
	\begin{equation}
		\mu_{\ZZ^2,\bla}^{e} = \lim_{\Lambda \uparrow \ZZ^2} \mu_{\Lambda, \bla}^{e} \,, \quad \quad
		\mu_{\ZZ^2,\bla}^{o} = \lim_{\Lambda \uparrow \ZZ^2} \mu_{\Lambda, \bla}^{o}. \vspace{-.2cm}
	\end{equation}
	Furthermore,
	\begin{enumerate}
		\item[(i)]
		      If $\mu_{\ZZ^2, \bla}^{o}(v\in I) = \mu_{\ZZ^2, \bla}^{e}(v\in I)$ for every $v\in \ZZ^2$, then $\mathcal{G}_{\bla}(\mathbb{Z}^2)$ is a singleton set.
		\item[(ii)] For any site $v \in \ZZ^2$, $
			      \mu_{\ZZ^2, \bla}^{e}(v \in I) - \mu_{\ZZ^2, \bla}^{o}(v \in I)\ge 0$ if $v$ is even and $\le 0$ if $v$ is odd.
	\end{enumerate}
\end{theorem}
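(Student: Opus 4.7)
The plan is to exploit the bipartite structure of $\ZZ^2$ via the standard ``sub-lattice'' transformation: set $\sigma_v = \eta_v$ if $v$ is even and $\sigma_v = 1-\eta_v$ if $v$ is odd, where $\eta_v=\ind{v\in I}$. An edge $u\sim v$ with $u$ even and $v$ odd has hard-core constraint $\eta_u\eta_v=0$, which in $\sigma$-coordinates becomes the monotonicity constraint $\sigma_u\leq\sigma_v$. Configurations thus correspond to elements of the sub-lattice (closed under pointwise $\vee$ and $\wedge$) of $\{0,1\}^{\Lambda}$ obeying these edge inequalities, and the weight $\prod_{v\in I}\lambda_v$ becomes a product over sites of site-dependent factors $g_v(\sigma_v)$. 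This identifies $\mu^{\tau}_{\Lambda,\bla}$, viewed in $\sigma$-coordinates, with a product measure conditioned on a sub-lattice; Holley's theorem then yields the FKG inequality with respect to the induced partial order $\preceq$ on $\eta$-configurations. Crucially, the even boundary condition $e$ corresponds to $\sigma\equiv 1$ on $\Lambda^c$ and the odd boundary condition $o$ to $\sigma\equiv 0$, which are the maximal and minimal possible boundary conditions in $\preceq$.

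First I would establish existence of the local weak limits and independence of the exhausting sequence. Holley's theorem gives $\mu^{o}_{\Lambda,\bla}\preceq \mu^{\tau}_{\Lambda,\bla}\preceq \mu^{e}_{\Lambda,\bla}$ for any boundary condition $\tau$. Applied to $\Lambda\subset\Lambda'$, DLR writes $\mu^{e}_{\Lambda',\bla}|_{\Lambda}$ as an average over boundary conditions $\tau'$ of measures $\mu^{\tau'}_{\Lambda,\bla}$, each of which is $\preceq \mu^{e}_{\Lambda,\bla}$; hence $\mu^{e}_{\Lambda',\bla}|_{\Lambda}\preceq \mu^{e}_{\Lambda,\bla}$. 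Restricted to any fixed finite $\Lambda_0$, the sequence $\mu^{e}_{\Lambda,\bla}|_{\Lambda_0}$ is therefore stochastically decreasing in $\Lambda$, and on a finite state space a bounded monotone sequence of probability measures converges. Independence of the exhausting sequence follows by comparing two candidate exhausting sequences through their unions using the same FKG sandwich. The argument for $\mu^{o}_{\ZZ^2,\bla}$ is symmetric.

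Part (ii) is immediate from the limit stochastic ordering $\mu^{o}_{\ZZ^2,\bla}\preceq \mu^{e}_{\ZZ^2,\bla}$: the function $\ind{v\in I}=\eta_v$ is increasing in $\preceq$ when $v$ is even and decreasing when $v$ is odd, giving the claimed signs. For part (i), I would first show that every Gibbs measure $\mu\in\mathcal{G}_{\bla}(\ZZ^2)$ satisfies $\mu^{o}_{\ZZ^2,\bla}\preceq\mu\preceq\mu^{e}_{\ZZ^2,\bla}$: by DLR, for any local observable $F$ that is monotone in $\preceq$ and supported in $\Lambda$, $\mu(F)=\int \mu^{\tau}_{\Lambda,\bla}(F)\,d\mu(\tau)$, and the Holley sandwich yields $\mu^{o}_{\Lambda,\bla}(F)\leq\mu(F)\leq\mu^{e}_{\Lambda,\bla}(F)$; letting $\Lambda\uparrow\ZZ^2$ gives the claim. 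Under the hypothesis of (i), the one-site marginals of $\sigma_v$ under $\mu^{e}_{\ZZ^2,\bla}$ and $\mu^{o}_{\ZZ^2,\bla}$ agree at every $v$. Strassen's theorem produces a monotone coupling with $\sigma^o\preceq\sigma^e$ almost surely; equality of each one-site marginal combined with the pointwise inequality $\sigma^o_v\leq\sigma^e_v$ forces $\sigma^o_v=\sigma^e_v$ almost surely at every $v$, so the coupling is supported on the diagonal and $\mu^{o}_{\ZZ^2,\bla}=\mu^{e}_{\ZZ^2,\bla}$. The sandwich then forces every Gibbs measure to coincide with this common measure.

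The main bookkeeping obstacle is verifying the Holley/FKG input carefully in the transformed variables: that the product-measure-on-a-sub-lattice structure persists in the presence of general boundary conditions $\tau$, and that boundary conditions are themselves partially ordered in the right way (so stochastic domination of boundary conditions transfers through Holley to stochastic domination of the corresponding measures). There is also a minor technical point in handling activities $\lambda_v=0$, where some local factors vanish and one must check the sub-lattice argument degenerates gracefully. Once these pieces are in place, the remainder is a standard monotone-limit argument.
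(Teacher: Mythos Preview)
Your proposal is correct and follows essentially the same approach as the paper, which does not give a detailed proof but simply cites the references \cite{GeorgiiHaeggMaes2001,BergSteif1999} and sketches that the hard-core model satisfies the FKG lattice condition and that even/odd boundary conditions are extremal via Holley's theorem. Your sub-lattice transformation $\sigma_v=\eta_v$ on even sites and $\sigma_v=1-\eta_v$ on odd sites is precisely the standard way to make this FKG/Holley structure explicit, and the monotone-limit and Strassen-sandwich arguments you outline are exactly what underlies the cited results.
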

Theorem~\ref{th:InfiniteVol-PhaseTrans}
can be found, for example, in Theorem~4.18
of~\cite{GeorgiiHaeggMaes2001} or Lemma~3.2 of~\cite{BergSteif1999} in
the case $\lambda_v \equiv \lambda >
	0$.  The proof relies on a standard tool:
the hard-core model satisfies the FKG lattice condition
(see~\cite[Proposition 1]{FKG1971}).  As a consequence, the even and odd boundary
conditions are ordered in the sense of stochastic domination
(see~\cite[Corollary~11]{Holley1974}).  These arguments apply to
site-dependent activities, and hence the proofs in the references
above extend to the site-dependent $\bla$ considered here.

The following \textbf{translation covariance property} for
$\mu_{\ZZ^2,\bla}^{\tau}$ $\tau \in \{o,e\}$ will be important.
For a translation $T_{\ba}$ by $\ba = (a_x, a_y)$,
\begin{equation}
	\label{eq:tranlationCov}
	T_{\ba}\mu^{\tau}_{\ZZ^2,\bla}=\mu_{\ZZ^2,T_{\mathbf{a}}\bla}^{\tau'} \,,
\end{equation}
where $\tau'$ is determined by the parity of the translation:
$\tau'=\tau$ if $a_x+a_y$ is even while
$\tau'$ is the opposite boundary condition ($e \leftrightarrow o$) if $a_x+a_y$ is odd.

The following estimate on occupation probabilities will be useful. By
considering
the extremal case when all neighbours of $v$ are not occupied,
\begin{equation}
	\label{eq:basic_bound}
	{\mu_{\Lambda, \bla}^{\tau} (v \in I)  \leq \frac{\lambda_v}{1+\lambda_v} \,, }
\end{equation}
{which is valid for any $\bla$, $\Lambda$ finite, and
boundary condition $\tau$. By \cref{th:InfiniteVol-PhaseTrans}, this
extends to $\Lambda=\ZZ^2$.}

\section{Proof of the main theorem}
\label{sec:proof-main}
The strategy to prove the theorem, based on the Aizenman-Wehr argument \cite{AizenmanWehr1990}, is to show that:
\begin{itemize}
	\item the {difference in the free energy under different boundary
	      conditions}
	      {due to}
	      the random field $X$ inside a box $\Lambda$ is of order
	      $\sqrt{|\Lambda|}$ times an \emph{unbounded} constant
	      (\cref{lem:gaussian-domination});
	\item the effect of changing boundary conditions {on the free
			      energy} is at most a \emph{deterministic} constant times $|\partial\Lambda|\asymp \hspace{-.1cm}\sqrt{|\Lambda|}$ (\cref{lem:bound-conditional}).
\end{itemize}
These two ingredients taken together lead to a contradiction unless the difference in the free energy between even and odd boundary conditions is zero.
Intuitively, \cref{lem:gaussian-domination} holds since each of the $|\Lambda|$
field variables in $\Lambda$ makes a roughly i.i.d.\ contribution to
the free energy, leading to a Gaussian shift of variance
$|\Lambda|$. \cref{lem:bound-conditional}, in contrast, exploits the
locality of the hard-core constraint: {changing the boundary
conditions can only have a boundary-size effect.}

We now make this more precise. The necessary definitions will be given
in terms of a fixed field $\mathbf{x}=(x_v)_{v\in \ZZ^2}$ with each
$x_v\ge 0$. In the statements of our lemmas we will take $\mathbf{x}$
to be the random activity field $X$.
For activities $\bla=(\lambda x_v)_{v\in \ZZ^2}$ with $\lambda>0$ we
use the following notation to emphasise the dependence on the field:
\begin{equation} \label{eq:Xdefs}
	\mu^{\tau,\bf{x}}_{\Lambda,\lambda}=\mu_{\Lambda, \bla}^\tau; \quad \quad   Z^{\tau}_{\Lambda,\lambda}(\bx)=Z^\tau_{\Lambda, \bla}; \quad \text{and} \quad \bra{\cdot}_{\Lambda,\lambda}^{\tau,\mathbf{x}}=\mu_{\Lambda, \bla}^\tau(\cdot).
\end{equation}
We use the same notation for the infinite-volume measures when $\tau\in \{e,o\}$, replacing $\Lambda$ by $\ZZ^2$.
Then, recalling that for $A$ finite $\mathbf{x}^{A}=\prod_{v\in
		A}x_{v}$, {for $\Lambda$ finite} we define
\begin{equation}
	\label{eq:def-G}
	G^\tau_{\Lambda,\lambda} (\mathbf{x})
	:=
	\frac{1}{\lambda} \log \Bbra{\bx^{I \cap \Lambda} }^{\tau,\mathbf{x}_{\Lambda^c}}_{\mathbb{Z}^2,\lambda} \,,
\end{equation}
where $\bx_{\Lambda^c}$ is given by
\begin{equation}
	\label{eq:switch-off}
	\mathbf{x}_{\Lambda^c}
	:=
	\begin{cases}
		1,   & \text{ if } v \in  \Lambda,   \\
		x_v, & \text{ if } v \in  \Lambda^c.
	\end{cases}
\end{equation}

The following technical lemma, whose proof will be given in
\cref{sec:derivatives}, shows how the derivatives of {$G$} are
connected to the marginal occupation probability of a site $v$. These
identities will be crucial.

\begin{lemma}
	\label{lem:dG}
	Suppose $\Lambda\subset\ZZ^{2}$ is finite, {$\tau \in \mathcal{I}_{\ZZ^2}$}, and
	$\mathbf{x}$
	satisfies $x_v\ge 0$ for all $v$.
	Then $G^\tau_{\Lambda,\lambda}$ is differentiable with respect to
	$\log x_v$ for every $v \in {\Lambda}$, with
	\[
		\frac{\partial}{\partial \log x_{v}}G_{\Lambda, \lambda}^\tau(\mathbf{x})
		= \frac{1}{\lambda} \mu_{\ZZ^2,\lambda}^{\tau, \mathbf{x}} (v \in I) \,.
	\]
\end{lemma}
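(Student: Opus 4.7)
The proof reduces to a direct computation once one makes a structural observation: in the definition of $G^\tau_{\Lambda,\lambda}(\mathbf{x})$, the outer measure $\mu^{\tau, \mathbf{x}_{\Lambda^c}}_{\mathbb{Z}^2, \lambda}$ uses activities frozen at $1$ on $\Lambda$, so it does not depend on $(x_v)_{v\in\Lambda}$. All dependence of $G^\tau_{\Lambda,\lambda}(\mathbf{x})$ on $x_v$ with $v\in\Lambda$ is therefore carried by the observable $\mathbf{x}^{I\cap\Lambda}$. The plan is to differentiate only this observable under the expectation, and then to identify the resulting ratio as an expectation under the untilted measure $\mu^{\tau,\mathbf{x}}_{\mathbb{Z}^2,\lambda}$.

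Since $\mathbf{x}^{I\cap\Lambda}=\prod_{u\in I\cap\Lambda}x_u$ depends only on $I$ restricted to the finite set $\Lambda$, both it and its $\partial/\partial\log x_v$ derivative $\mathds{1}_{v\in I}\cdot\mathbf{x}^{I\cap\Lambda}$ are bounded by $\max(1,\max_{u\in\Lambda}x_u)^{|\Lambda|}$, uniformly for $x_v$ in any bounded neighbourhood of its current (positive) value. Dominated convergence therefore allows one to differentiate under the expectation, yielding
\[
\frac{\partial}{\partial\log x_v}\bigl(\lambda\,G^\tau_{\Lambda,\lambda}(\mathbf{x})\bigr)
=\frac{\bigl\langle\mathds{1}_{v\in I}\,\mathbf{x}^{I\cap\Lambda}\bigr\rangle^{\tau,\mathbf{x}_{\Lambda^c}}_{\mathbb{Z}^2,\lambda}}
{\bigl\langle\mathbf{x}^{I\cap\Lambda}\bigr\rangle^{\tau,\mathbf{x}_{\Lambda^c}}_{\mathbb{Z}^2,\lambda}}.
\]

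To identify this ratio with $\mu^{\tau,\mathbf{x}}_{\mathbb{Z}^2,\lambda}(v\in I)$, I would first pass to a finite box $M\supset\Lambda$. Reweighting each term of $\mu^{\tau,\mathbf{x}_{\Lambda^c}}_{M,\lambda}$ by $\mathbf{x}^{I\cap\Lambda}$ replaces the frozen value $\lambda\cdot 1$ at each $v\in\Lambda$ by $\lambda x_v$; after renormalisation this produces exactly $\mu^{\tau,\mathbf{x}}_{M,\lambda}$, so
\[
\frac{\bigl\langle\mathds{1}_{v\in I}\,\mathbf{x}^{I\cap\Lambda}\bigr\rangle^{\tau,\mathbf{x}_{\Lambda^c}}_{M,\lambda}}
{\bigl\langle\mathbf{x}^{I\cap\Lambda}\bigr\rangle^{\tau,\mathbf{x}_{\Lambda^c}}_{M,\lambda}}
=\mu^{\tau,\mathbf{x}}_{M,\lambda}(v\in I).
\]
The only mildly delicate step, and the main obstacle, is passing this identity to $M\uparrow\mathbb{Z}^2$: both numerator and denominator are bounded observables depending only on $I|_\Lambda$, so the local weak convergence of the finite-volume measures to their infinite-volume limits (Theorem~\ref{th:InfiniteVol-PhaseTrans} when $\tau\in\{e,o\}$, and the analogous construction in general) carries the identity to the limit. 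Dividing by $\lambda$ then completes the proof.
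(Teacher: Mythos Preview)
Your proof is correct and follows essentially the same approach as the paper: differentiate the local observable $\mathbf{x}^{I\cap\Lambda}$ under the expectation via dominated convergence, and identify the resulting ratio as $\mu^{\tau,\mathbf{x}}_{\mathbb{Z}^2,\lambda}(v\in I)$ through the finite-volume Radon--Nikodym relation passed to the infinite-volume limit by local weak convergence. The only cosmetic difference is that the paper establishes the Radon--Nikodym identity first and differentiates afterwards, whereas you reverse the order.
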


For the statements of the two key lemmas, we define what roughly
corresponds to the difference in free energy contribution from
$(X_v)_{v\in \Lambda}$ under even and odd boundary conditions:
\begin{align}
	\label{eq:defF}
	F_{\Lambda,\lambda}(X)=F_{\Lambda, \lambda}((X_v)_{v\in \Lambda})
	:=
	\mathbb{E}\left[G^{e}_{\Lambda,\lambda}(X)
		-
		G^{o}_{\Lambda,\lambda}(X)
		\mid
		(X_v)_{v\in \Lambda}\right].
\end{align}

\begin{remark}[Important!] Given $\bx$
	we can define $F_{\Lambda,
				\lambda}$ as a function of $\bx|_{\Lambda}=(x_v)_{v\in
				\Lambda}$ (by conditioning on $(X_v)_{v\in \Lambda}=(x_v)_{v\in
				\Lambda}$)\footnote{This conditioning can readily be seen to be
		well-defined. For instance, since the field variables inside
		$\Lambda$ and outside
		$\Lambda$ are independent we can rephrase the conditional
		expectation as a true expectation but replacing the argument
		$X$ of $G$ with the vector equal to $X$ outside
		$\Lambda$ and equal to $\bx$ inside $\Lambda$.}. We write
	\begin{equation}\label{eq:Fdeterm}
		F_{\Lambda,\lambda}(\bx)=F_{\Lambda,\lambda}(\bx|_\Lambda) \,,
	\end{equation}
	for this function. {The definition~\eqref{eq:defF} is
			this function evaluated at $(X_v)_{v\in \Lambda}$}.
\end{remark}
\cref{lem:dG} implies that $F_{\Lambda,\lambda}$ is differentiable everywhere with respect to $\log x_{v}$ for any $v\in \Lambda$, and
\begin{equation}
	\label{eq:derF}
	\frac{\partial}{\partial \log x_{v}} F_{\Lambda,\lambda}(\bx)=\mathbb{E}[\mu_{\mathbb{Z}^2,\lambda}^{e,X}(v\in I)-\mu_{\mathbb{Z}^2,\lambda}^{o,X}(v\in I)
		\mid (X_w)_{w\in \Lambda}=(x_w)_{w\in \Lambda}] .
\end{equation}
By \cref{th:InfiniteVol-PhaseTrans}(ii) we then have that for all $\bx$,
\begin{align}\label{eq:mono-derivatives-F}
	\frac{\partial}{\partial \log x_{v}} F_{\Lambda,\lambda}(\bx)\ge 0 \,\text{ if $v\in e \,,$   \hspace{3mm} and } \hspace{3mm}
	\frac{\partial}{\partial \log x_{v}} F_{\Lambda,\lambda}(\bx)\le 0 \,\text{ if $v\in o$}.
\end{align}

We now state the two key lemmas.

\begin{lemma}\label{lem:bound-conditional}
	Fix $\lambda >0$.
	Then, there exists a deterministic constant $c_\lambda >0$ such that for any $j\in \NN$ and any $\bx=(x_v)_{v\in \ZZ^2}$ non-negative, we have (recall the notation \eqref{eq:Fdeterm})
	\begin{equation}
		\label{eq:boundF}
		|
		F_{\Lambda_j,\lambda}(\bx)
		|
		\le
		c_\lambda |\partial \Lambda_j|. \qquad
	\end{equation}
\end{lemma}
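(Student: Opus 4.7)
My plan is to reduce to the $\mathbf{x}$-pointwise estimate
\begin{equation}\label{eq:plan-pointwise}
	|G^e_{\Lambda_j, \lambda}(\mathbf{x}) - G^o_{\Lambda_j, \lambda}(\mathbf{x})| \le \frac{C}{\lambda} \sum_{v \in \partial^{(2)} \Lambda_j} \log(1 + \lambda x_v),
\end{equation}
where $\partial^{(2)} \Lambda_j$ is the set of vertices at graph distance exactly two from $\Lambda_j$ and $C$ is universal. Because $\partial^{(2)} \Lambda_j \subset \Lambda_j^c$ and $|\partial^{(2)} \Lambda_j| = O(|\partial \Lambda_j|)$, the conditional expectation defining $F_{\Lambda_j,\lambda}$ converts~\eqref{eq:plan-pointwise} into the deterministic bound
\[
	|F_{\Lambda_j, \lambda}(\mathbf{x})| \le \frac{C}{\lambda}|\partial^{(2)} \Lambda_j|\cdot \mathbb{E}[\log(1+\lambda X)] \le c_\lambda |\partial \Lambda_j|,
\]
with $\mathbb{E}[\log(1+\lambda X)] \le \lambda \mathbb{E}[X] < \infty$ under the moment hypothesis of \cref{thm:main-full}.

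To establish~\eqref{eq:plan-pointwise} I would work in finite volume $\Lambda_k \supset \Lambda_j$ and then pass to $k \to \infty$ using \cref{th:InfiniteVol-PhaseTrans}. Writing the observable in $G^\tau$ as a ratio of finite-volume partition functions and conditioning on $J := I\cap\partial\Lambda_j$ via the DLR property yields
\[
	A^\tau_k(\mathbf{x}) := \langle \mathbf{x}^{I\cap\Lambda_j}\rangle^{\tau,\mathbf{x}_{\Lambda_j^c}}_{\Lambda_k, \lambda} = \sum_J p^\tau_k(J)\, W(J, \mathbf{x}),
\]
where $W(J,\mathbf{x}) := Z^J_{\Lambda_j, \lambda\mathbf{x}}/Z^J_{\Lambda_j, \lambda}$ is $\tau$-independent (the interior activities under both $e$ and $o$ truncated fields agree, being equal to $\lambda$ on $\Lambda_j$), while $p^\tau_k$ denotes the law of $I\cap\partial\Lambda_j$ under $\mu^{\tau,\mathbf{x}_{\Lambda_j^c}}_{\Lambda_k,\lambda}$. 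Factoring the numerator of $p^\tau_k(J)$ into an interior piece (independent of $\tau$) and an outer-annulus piece $W^\tau_{\mathrm{out}}(J)$ gives $p^e_k(J)/p^o_k(J) = K \cdot r(J)$, with $K$ a $J$-independent constant and $r(J) := W^e_{\mathrm{out}}(J)/W^o_{\mathrm{out}}(J)$.

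The critical manoeuvre, and what I expect to be the main obstacle, is eliminating $K$. The direct bound $|\log(p^e_k/p^o_k)| \le |\log K| + \max_J|\log r(J)|$ is useless because $|\log K|$ encodes the bulk boundary effect at $\partial\Lambda_k$ and diverges with $k$. Instead, the normalisations $\sum_J p^\tau_k(J) = 1$ force $K \cdot \mathbb{E}^{p^o_k}[r] = 1$, and hence
\[
	\frac{A^e_k(\mathbf{x})}{A^o_k(\mathbf{x})} = \frac{\mathbb{E}^{p^o_k}[rW]}{\mathbb{E}^{p^o_k}[W]\cdot \mathbb{E}^{p^o_k}[r]}.
\]
The right-hand side is a ratio of two averages of $r(J)$, each lying in $[\min_J r(J), \max_J r(J)]$, giving the crucial comparison $|\log(A^e_k/A^o_k)| \le \max_{J,J'}|\log r(J)-\log r(J')|$, uniform in $k$.

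Finally, I bound $|\log r(J) - \log r(J')|$ using the FKG lattice condition for the hard-core measure. For each $\tau$, FKG-positivity gives
\[
	\frac{Z^{J,\tau}_{\mathrm{out}}}{Z^{\emptyset,\tau}_{\mathrm{out}}} = \mu^{\emptyset,\tau}_{\mathrm{out}}(I'\cap F_J = \emptyset) \ge \prod_{v\in F_J}\mu^{\emptyset,\tau}_{\mathrm{out}}(v\notin I') \ge \prod_{v\in F_J}\frac{1}{1+\lambda x_v},
\]
where $F_J := N(J)\cap \mathrm{outer} \subseteq \partial^{(2)}\Lambda_j$ and the last inequality is~\eqref{eq:basic_bound}. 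Taking logs and applying a triangle inequality in $J, J'$ and in $\tau\in\{e,o\}$ yields $|\log r(J)-\log r(J')| \le 4\sum_{v\in\partial^{(2)}\Lambda_j}\log(1+\lambda x_v)$ uniformly in $k$; passing to the limit $k\to\infty$ gives~\eqref{eq:plan-pointwise} and concludes the proof.
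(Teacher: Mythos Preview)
Your argument is essentially correct and takes a genuinely different route from the paper. One caveat: the ``FKG-positivity'' step $\mu^{\emptyset,\tau}_{\mathrm{out}}(I'\cap F_J=\emptyset)\ge\prod_{v\in F_J}\mu^{\emptyset,\tau}_{\mathrm{out}}(v\notin I')$ is not justified as stated --- the FKG lattice condition invoked in the paper is for the \emph{staggered} (parity-twisted) order, and since $F_J$ generically contains vertices of both parities, the events $\{v\notin I'\}$ are not all of the same monotonicity type. Fortunately the inequality you actually need, namely $Z^{J,\tau}_{\mathrm{out}}/Z^{\emptyset,\tau}_{\mathrm{out}}\ge\prod_{v\in F_J}(1+\lambda x_v)^{-1}$, follows directly from the combinatorial ``erasing'' map $I'\mapsto I'\setminus F_J$ (each fibre has total weight at most $\prod_{v\in F_J}(1+\lambda x_v)$), so the proof stands.

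The paper does not attempt a pointwise bound on $G^e-G^o$. Instead it expresses $F_{L,\Lambda_j,\lambda}$ as an expected log-ratio of partition functions, see~\eqref{eq:Fexpression}, and applies a spatial transformation $\phi$ that is the identity on $\Lambda_{j+1}$ and a parity-swapping reflection outside, yielding $Z^\tau_{L,\lambda}(\mathbf{y})\le(1+\lambda\mathbf{y})^{\Lambda_{j+1}\setminus\Lambda_j}Z^{\tau'}_{L,\lambda}(\mathbf{y}\circ\phi)$. The cancellation of the bulk partition functions then comes from $X\circ\phi\stackrel{d}{=}X$ conditionally on $(X_v)_{v\in\Lambda_j}$, i.e.\ from the i.i.d.\ law of the disorder together with the reflection symmetry of $\Lambda_j$. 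Your route instead decouples interior from exterior by conditioning on $I\cap\partial\Lambda_j$ and eliminates the divergent normalising constant $K$ via the ratio-of-averages identity; this produces a bound on $G^e-G^o$ valid for \emph{every} fixed environment, using neither the lattice symmetry nor exchangeability of the field --- the i.i.d.\ hypothesis enters only at the final step, to integrate $\log(1+\lambda X_v)$ over the annulus. The paper's route, by contrast, makes explicit the spatial symmetry that substitutes for spin-flip in the Aizenman--Wehr scheme.
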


\begin{lemma}\label{lem:gaussian-domination}
	Fix $\lambda>0$.
	For all $ t \ge 0 $, we have that
	\begin{equation}\label{eq:lem-gaussian-domination}
		\liminf_{j \uparrow \infty}
		\mathbb{E}\left[
			\exp\left( \frac{t F_{\Lambda_j,\lambda}(X)}{ \sqrt{|\Lambda_j|}}\right)
			\right]
		\ge
		\exp(t^2 b^2/2),
	\end{equation}
	where
	\begin{equation}\label{eq:b-dominates-expected-value}
		b^2
		\ge
		\frac{1}{2} \mathbb{E}[\mathbb{E}[F_{\Lambda_j,\lambda}(X) \mid X_{\oo}]^2]
		+
		\frac{1}{2} \mathbb{E}[\mathbb{E}[F_{\Lambda_j,\lambda}(X) \mid X_{\oo'}]^2].
	\end{equation}
	for $\oo=(0,0), \oo'=(0,1)$ and for and any $j \ge 0$.
\end{lemma}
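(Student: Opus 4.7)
The proof is an Aizenman--Wehr-style argument, to be carried out in four steps: (i) a reflection symmetry enforces $\mathbb{E}[F_{\Lambda_j,\lambda}(X)]=0$ and the equality of the two terms defining $b^2$; (ii) a martingale decomposition of $F_{\Lambda_j,\lambda}(X)$ with Lipschitz control on the increments; (iii) an ANOVA-type variance lower bound on $\mathrm{Var}(F_{\Lambda_j,\lambda}(X))/n_j$; and (iv) a martingale CLT combined with the Portmanteau theorem.

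First, I would exploit the reflection $R\colon(x,y)\mapsto(x,-y+1)$. It is an involutive graph automorphism of $\ZZ^2$, preserves $\Lambda_j$ (because the side length is even), and swaps the even and odd sublattices. Relabelling $I\mapsto R(I)$ in the partition function sum (equivalently, combining \eqref{eq:tranlationCov} for the parity-swapping translation $T_{\oo'}$ with a parity-preserving reflection that also fixes $\Lambda_j$) yields $G^e_{\Lambda_j,\lambda}(\bx)=G^o_{\Lambda_j,\lambda}(R^*\bx)$, where $(R^*\bx)_v:=\bx_{R(v)}$. Since $R^*X\overset{d}{=}X$ by i.i.d.\ invariance, taking expectations gives $\mathbb{E}[F_{\Lambda_j,\lambda}(X)]=0$. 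Applied pointwise the same identity gives $F_{\Lambda_j,\lambda}(R^*X)=-F_{\Lambda_j,\lambda}(X)$; combining this with $R(\oo)=\oo'$ and distributional invariance gives $\mathbb{E}[F_{\Lambda_j,\lambda}(X)\mid X_\oo=y]=-\mathbb{E}[F_{\Lambda_j,\lambda}(X)\mid X_{\oo'}=y]$ as functions of $y$, so the two conditional variances in \eqref{eq:b-dominates-expected-value} are equal.

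Next, I would fix an enumeration $\Lambda_j=\{v_1,\dots,v_{n_j}\}$ and write $F_{\Lambda_j,\lambda}(X)=\sum_{k=1}^{n_j}D_k^{(j)}$ for the martingale differences $D_k^{(j)}:=\mathbb{E}[F_{\Lambda_j,\lambda}(X)\mid\mathcal{F}_k]-\mathbb{E}[F_{\Lambda_j,\lambda}(X)\mid\mathcal{F}_{k-1}]$ with $\mathcal{F}_k:=\sigma(X_{v_1},\dots,X_{v_k})$. Combining \cref{lem:dG} and \eqref{eq:basic_bound} gives
\[
  \left|\frac{\partial G^\tau_{\Lambda,\lambda}}{\partial x_v}\right|\,=\,\frac{\mu^{\tau,\bx}_{\ZZ^2,\lambda}(v\in I)}{\lambda x_v}\le\frac{1}{1+\lambda x_v}\le 1,
\]
so $G^e$ and $G^o$ are each $1$-Lipschitz in every coordinate $x_v$, and therefore $|D_k^{(j)}|\le 2|X_{v_k}-X_{v_k}'|$ for an independent copy $X_{v_k}'$. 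The $(2+\varepsilon)$-moment hypothesis on $X$ then yields $\sup_{j,k}\mathbb{E}[|D_k^{(j)}|^{2+\varepsilon}]<\infty$, which secures both tightness of $F_{\Lambda_j,\lambda}(X)/\sqrt{n_j}$ and the Lindeberg condition for the martingale CLT. For the variance lower bound, the orthogonal Hoeffding (ANOVA) decomposition of a function of independent random variables gives
\[
  \mathrm{Var}(F_{\Lambda_j,\lambda}(X))\ \ge\ \sum_{v\in\Lambda_j}\mathbb{E}\!\left[\mathbb{E}[F_{\Lambda_j,\lambda}(X)\mid X_v]^2\right];
\]
applying \eqref{eq:tranlationCov} to translate $\Lambda_j$ so as to align a generic $v$ with $\oo$ (and the reflection of Step~1 to match parities) allows comparison of the main-effect variance at $v$ with that at $\oo$ or $\oo'$ depending on parity, and since $|\Lambda_j\cap e|=|\Lambda_j\cap o|=n_j/2$, summing by parity gives
\[
  \mathrm{Var}(F_{\Lambda_j,\lambda}(X))/n_j\ \ge\ \tfrac12\,\mathbb{E}\!\left[\mathbb{E}[F_{\Lambda_j,\lambda}(X)\mid X_\oo]^2\right]+\tfrac12\,\mathbb{E}\!\left[\mathbb{E}[F_{\Lambda_j,\lambda}(X)\mid X_{\oo'}]^2\right]
\]
uniformly in $j$, i.e.\ $\mathrm{Var}(F_{\Lambda_j,\lambda}(X))/n_j\ge b^2$.

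Finally, given any subsequence of $\bigl(F_{\Lambda_j,\lambda}(X)/\sqrt{n_j}\bigr)_j$, tightness from the $(2+\varepsilon)$-moment bound permits the extraction of a further sub-subsequence along which the martingale CLT applies to give $F_{\Lambda_j,\lambda}(X)/\sqrt{n_j}\Rightarrow \mathcal{N}(0,\sigma^2)$ with $\sigma^2\ge b^2$. Since $y\mapsto e^{ty}$ is non-negative and continuous, Portmanteau's theorem (in its lower-semicontinuity form) yields $\liminf\mathbb{E}[\exp(tF_{\Lambda_j,\lambda}(X)/\sqrt{n_j})]\ge e^{t^2\sigma^2/2}\ge e^{t^2b^2/2}$ along that sub-subsequence, and the standard ``every subsequence has a further sub-subsequence with the same $\liminf$ bound'' argument promotes this to the full $\liminf$ claimed in \eqref{eq:lem-gaussian-domination}. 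The principal obstacle is the variance lower bound: because $F_{\Lambda_j,\lambda}$ is tied to the fixed box $\Lambda_j$, a naive translation to align a generic $v\in\Lambda_j$ with $\oo$ shifts the box rather than $v$ alone, so the comparison of single-site main-effect variances across positions in $\Lambda_j$ requires a careful use of \eqref{eq:tranlationCov} together with the monotonicity \eqref{eq:mono-derivatives-F} (equivalently, the FKG lattice condition) to absorb the boundary discrepancies.
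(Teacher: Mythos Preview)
Your framework is Aizenman--Wehr and matches the paper's in spirit, but there is a genuine gap at the step you yourself flag as the ``principal obstacle'': comparing the single-site main-effect variances $\mathbb{E}\bigl[\mathbb{E}[F_{\Lambda_j,\lambda}(X)\mid X_v]^2\bigr]$ across different $v\in\Lambda_j$. Translation covariance \eqref{eq:tranlationCov} does shift the box, so what you actually obtain (for $v$ even, say) is that $\mathbb{E}[F_{\Lambda_j}\mid X_v]$ has the same law as $\mathbb{E}[F_{\Lambda_j-v}\mid X_\oo]$, with a \emph{different} box on the right. There is no monotonicity in the box for these conditional expectations, and neither \eqref{eq:mono-derivatives-F} nor the FKG lattice condition supplies one; your proposed fix is not a proof. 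The paper resolves this by introducing \emph{box-independent} increments $W_i:=\mathbb{E}[G^e_\Lambda-G^o_\Lambda\mid\mathcal{H}^{\le v_i}]-\mathbb{E}[G^e_\Lambda-G^o_\Lambda\mid\mathcal{H}^{<v_i}]$, where $\mathcal{H}^{\le v}=\sigma(X_w:w\le v)$ is generated by \emph{all} sites below $v$, not just those in $\Lambda$. Using \cref{lem:dG} to write $G^e_\Lambda-G^o_\Lambda$ as an integral of infinite-volume occupation-probability differences, one sees that $W_i$ does not depend on $\Lambda$ and is exactly translation-covariant under even shifts. The tower law gives $Y_i=\mathbb{E}[W_i\mid\mathcal{H}_\Lambda]$, and the Tempel'man ergodic theorem (applied separately on the even and odd sublattices) yields $\frac{1}{|\Lambda|}\sum_i\mathbb{E}[W_i^2\mid\mathcal{H}^{<v_i}]\to b^2:=\tfrac12(\mathbb{E}[W_1^2]+\mathbb{E}[W_2^2])$ in probability; an additional $L^1$ estimate transfers this to the $Y_i$. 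The bound \eqref{eq:b-dominates-expected-value} then follows from Jensen together with the tower identity $\mathbb{E}[W_1\mid X_\oo]=\mathbb{E}[F_\Lambda\mid X_\oo]$ --- which is also what makes the right-hand side of \eqref{eq:b-dominates-expected-value} independent of $\Lambda$. In fact, once this $\Lambda$-independence is known, your ANOVA lower bound would go through; but the $\Lambda$-independence is the content, and it requires the $W_i$ construction.

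A secondary difference: the paper does not pass through a martingale CLT plus Portmanteau. It uses a direct exponential lower bound for sums of mean-zero martingale increments (the inequality recorded as \cite[(7.53)]{Bovier06}), which needs only the mean-zero property from Step~1 and convergence in probability of the truncated conditional variances. Your CLT route could work in principle, but verifying the CLT hypotheses again requires the convergence of $\frac{1}{|\Lambda|}\sum_i\mathbb{E}[Y_i^2\mid\mathcal{F}_{i-1}]$ to a deterministic constant --- precisely the point where the $W_i$ and the ergodic theorem enter. So the obstacle you identify is not a boundary detail to be absorbed; it is the heart of the argument.
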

In the proof of Lemma~\ref{lem:gaussian-domination} we will see that
the {right-hand}
side of \eqref{eq:b-dominates-expected-value} does not depend on $\Lambda$ (as long as $\{\oo,\oo'\}\subset\Lambda$).
We will prove the preceding lemmas in the next section. The remainder of
this section shows that they are enough to establish \cref{thm:main-full}.

\begin{proof}[Proof of \cref{thm:main-full} given \cref{lem:dG,lem:bound-conditional,lem:gaussian-domination}.]

	For the proof, we fix $\lambda>0$, and $\Lambda=\Lambda_j$ for some
	$j\ge1$. First, observe that $b$ from
	\eqref{eq:b-dominates-expected-value} must be equal to $0$, otherwise
	there is a clear contradiction between
	\cref{eq:boundF,eq:lem-gaussian-domination}. This implies that $
		\mathbb{E}[\mathbb{E}[F_{\Lambda,\lambda}(X) \mid X_{\oo}]^2]
		+
		\mathbb{E}[\mathbb{E}[F_{\Lambda,\lambda}(X) \mid X_{\oo'}]^2]=0$.
	Hence,
	\begin{equation}\label{eq:F-equiv-zero}
		f(X_{\oo}):=\mathbb{E}[F_{\Lambda,\lambda}(X) \mid X_{\oo}] = 0, \quad \mathbf{P} \text{-a.s}
	\end{equation}
	where we recall that $\mathbf{P}$ is the (marginal) law of
	$X_{\oo}$.

	For $X=(X_w)_{w\in \ZZ^2}$, let $X^{v,x}$ denote the vector obtained
	by replacing $X_v$ with $x$, leaving all other $X_{v'}$
	unchanged. Then setting
	\begin{equation}\label{eq:f}
		f(x)=\mathbb{E}[F_{\Lambda, \lambda}(X^{\oo,x})]\,, \quad \text{ for } x\ge 0.
	\end{equation}
	equation~\eqref{eq:F-equiv-zero} says that $f$ is equal to $0$ on the
	support of $\mathbf{P}$.  Since $f$ is differentiable on $\RR$ with respect to $\log x$ by \eqref{eq:derF} and has non-negative derivative by \eqref{eq:mono-derivatives-F}, we deduce that
	\begin{equation}
		\frac{\partial}{\partial \log x} f \equiv 0
		\text{ on the convex hull of the support of $\mathbf{P}$.}
	\end{equation}
	Using the explicit expression \eqref{eq:derF} for the derivative of
	$F_{\Lambda, \lambda}$
	we thus obtain that
	\begin{equation}
		\frac{\partial}{\partial \log x} f(X_{\oo})=   \mathbb{E}[\mu_{\mathbb{Z}^2,\lambda}^{e,X}(\oo\in I)-\mu_{\mathbb{Z}^2,\lambda}^{o,X}(\oo\in I) \mid X_{\bf o}] = 0 , \quad \mathbf{P} \text{-a.s.}
	\end{equation}
	Taking the expectation of the preceding equation we deduce that
	$
		\mu_{\mathbb{Z}^2,\lambda}^{e,X}(\oo\in I)-\mu_{\mathbb{Z}^2,\lambda}^{o,X}(\oo\in I)
	$
	has zero expectation. Since this is the expectation of a non-negative
	quantity by
	\cref{th:InfiniteVol-PhaseTrans}(ii), we have
	\begin{equation*}
		\mu_{\mathbb{Z}^2,\lambda}^{e,X}(\oo\in I)-\mu_{\mathbb{Z}^2,\lambda}^{o,X}(\oo\in I) = 0, \quad \mathbb{P}\text{-a.s.}
	\end{equation*}
	The same holds for $\oo'$ instead of $\oo$ by the same argument with opposite inequalities, and then, for any $v\in \ZZ^2$ by translation covariance.
	Thus
	$ \mu_{\mathbb{Z}^2,\lambda}^{e,X}(v\in I)-\mu_{\mathbb{Z}^2,\lambda}^{o,X}(v\in I) = 0 $, $\mathbb{P}$-a.s.~for all $v$.
	By \cref{th:InfiniteVol-PhaseTrans}(i), we have that
	$\mathbb{P}$-a.s.,~the set $G_{\bla}(\ZZ^2)$
	is a singleton.
\end{proof}

\section{Proofs of the main lemmas}
In the following sections we prove Lemmas~\ref{lem:dG}, \ref{lem:bound-conditional},
and~\ref{lem:gaussian-domination}, respectively.

\subsection{Occupation probabilities by differentiation}
\label{sec:derivatives}
In this section we prove Lemma~\ref{lem:dG}. This is a technical lemma
that justifies the exchange of differentiation with taking an
infinite-volume limit.
\begin{proof}[Proof of Lemma \ref{lem:dG}]
	Fix $\lambda>0$, $\Lambda$ finite and $\bx = (x_v)_{v \in \ZZ^2}$, with $x_v \ge 0$.
	First, since $\bx$ and $\bx_{\Lambda^c}$ only differ in $\Lambda$, if
	$I \in {\II_{\Lambda_L}^{\tau}}$ then
	\[
		\frac{d \mu_{\Lambda_L, \lambda}^{\tau, \bx}}{d \mu_{\Lambda_L, \lambda}^{\tau, \bx_{\Lambda^c}}} (I)
		=
		\frac{Z_{L,\lambda}^{\tau}(\bx_{\Lambda^c})}{Z_{L,\lambda}^{\tau}(\bx)} \, \bx^{I\cap \Lambda}
		=
		\frac{\bx^{I \cap \Lambda}}{\Bbra{\bx^{I \cap \Lambda}}^{\tau,\bx_{\Lambda^c}}_{\Lambda_L,\lambda} } \,.
	\]
	The last term above only depends on the occupation variables in the finite set $\Lambda$.
	Hence, for {any function $f$ depending only on occupation variables
			in $\Lambda$}
	and every $L$ such that $\Lambda_L\supset \Lambda$ we have
	\begin{equation}
		\label{eq:RN-expectation}
		\Bbra{f}^{\tau, \bx}_{\Lambda_L,\lambda}
		=
		\frac{\Bbra{f \, \bx^{I \cap \Lambda} }^{\tau,\bx_{\Lambda^c}}_{\Lambda_L,\lambda}}{\Bbra{ \bx^{I \cap \Lambda} }^{\tau,\bx_{\Lambda^c}}_{\Lambda_L,\lambda} } \,.
	\end{equation}
	Taking the limit $L\to\infty$
	and using local weak convergence of the finite-volume Gibbs measures, the same relation holds for the infinite-volume Gibbs measures $\mu_{\ZZ^2,\lambda}^{\tau,\bx}$ and $\mu_{\ZZ^2,\lambda}^{\tau,\bx_{\Lambda^c}}$ for~$\tau\in \{e,o\}$.

	Now, we turn to checking the differentiability of $G$.
	Fix a site $v \in \Lambda$ and let $y=\log (x_{v})$.
	Notice that $\bx^{I\cap \Lambda} = \prod_{v \in \Lambda} x_{v}^{\ind{v \in I}}$ is differentiable with respect to $y$ and its derivative is bounded:
	\[
		\frac{\partial}{\partial y} \prod_{w \in \Lambda} x_w^{\ind{w \in I}}
		=
		\ind{v \in I} \prod_{w \in \Lambda} x_w^{\ind{w \in I}}
		\leq
		\prod_{w \in \Lambda} (x_w \vee 1) \,.
	\]
	Since this bound depends only on the finite set $\Lambda$,
	we may apply dominated convergence to exchange derivative and expectation and obtain
	\[
		\frac{\partial}{\partial y} G^\tau_{\Lambda,\lambda} (\mathbf{x})
		=
		\frac{1}{\lambda}
		\frac{\frac{\partial}{\partial y} \Bbra{ \bx^{I \cap \Lambda} }^{\tau,\bx_{\Lambda_j^c}}_{\ZZ^2,\lambda}}{\Bbra{\bx^{I \cap \Lambda}}^{\tau,\bx_{\Lambda^c}}_{\ZZ^2,\lambda}}
		=
		\frac{1}{\lambda}
		\frac{\Bbra{ \ind{v \in I} \,\, \bx^{I \cap \Lambda}
			}^{\tau,\bx_{\Lambda_j^c}}_{\ZZ^2,\lambda}}{\Bbra{\bx^{I \cap
					\Lambda}}^{\tau,\bx_{\Lambda^c}}_{\ZZ^2,\lambda}}
		{=
			\frac{1}{\lambda} \Bbra{ \ind{v \in I}
			}^{\tau,\bx}_{\ZZ^2,\lambda}}\,,
	\]
	The last equality follows from the (infinite-volume limit of the)
	Radon-Nikodym
	derivative~\eqref{eq:RN-expectation}.
\end{proof}

\subsection{Controlling the effect of boundary conditions}

We begin with some remarks.

\subsubsection*{Isometries of $\ZZ^2$ and symmetries between boundary
	conditions}

{The function $F_{\Lambda,\lambda}$ in
	Lemma~\ref{lem:bound-conditional} is
	the difference of the hard-core expectation $G_{\Lambda,\lambda}$ evaluated under opposite boundary conditions (even/odd).
	The conclusion of the lemma is that the magnitude of this difference is controlled by the size of the boundary $|\partial\Lambda|$.
	To prove this, we will perform a transformation that changes the boundary conditions from one to the other, and show that the effect is indeed bounded above by order $|\partial \Lambda|$.
	In spin systems, such as the Ising model, a natural way to do this is through a spin-flip symmetry ($+1\leftrightarrow -1$).
	The hard-core mode, however, lacks an analogous internal symmetry; instead, one can rely on some spatial symmetries of the lattice.
	In particular, we will make use of a
	\emph{vertical reflection} across the line
	$x=1/2$, defined by $\theta ((v_1,v_2)):= (1-v_1,v_2)$.
	This transformation preserves independent sets, exchanges parity, and
	maps every centered box $\Lambda_j$ onto itself. In particular,
	$\theta (\II_{\Lambda_j}^{e}) = \II_{\Lambda_j}^{o}$.
	The proof will
	in fact use a slightly different transformation, but the intuition is
	the same.}

\subsubsection*{Finite volume observables}
The function $F_{\Lambda,\lambda}(X)$ in \eqref{eq:defF} is defined in terms of infinite-volume measures.
However, it will be convenient to perform the main computations of the proof in finite volume and, afterwards, take the thermodynamic limit to obtain the desired result.
We define the finite-volume analogue of \eqref{eq:def-G}, for $L\in
	\NN$, $ \Lambda_L\supset \Lambda$ and $\bx=(x_v)_{v\in \ZZ^2}$
pointwise non-negative by
\begin{figure}
	\centering

	\begin{center}
		\begin{tikzpicture}[scale=0.5]

			\def\rEmpty{0.22}

			\draw[thin, gray] (0,0) grid (9,9);

			\draw[line width=1.4pt, black!80] (0,0) rectangle (9,9);

			\draw[line width=1.4pt, black!80] (3,3) rectangle (6,6);

			\foreach \i in {0,...,9}{
					\foreach \j in {0,...,9}{
							\ifnum\i<3
								\draw[gray!60, fill=gray!20] (\i,\j) circle (\rEmpty);
							\else\ifnum\i>6
									\draw[gray!60, fill=gray!20] (\i,\j) circle (\rEmpty);
								\else\ifnum\j<3
										\draw[gray!60, fill=gray!20] (\i,\j) circle (\rEmpty);
									\else\ifnum\j>6
											\draw[gray!60, fill=gray!20] (\i,\j) circle (\rEmpty);
										\else
											\draw[gray!70, fill=white] (\i,\j) circle (\rEmpty);
										\fi\fi\fi\fi
						}
				}

		\end{tikzpicture}
	\end{center} 	\caption{Illustration of $\bx_{\Lambda_{2}^{c}}$ as defined
		in~\eqref{eq:switch-off}.
		The outer bold square delimits $\Lambda_5$, the inner one
		delimits $\Lambda_2$.
		Sites
		$v \in \Lambda_5 \setminus \Lambda_2$ (in grey) have activity
		$\lambda_v = \lambda \, x_v$, while $\lambda_v = \lambda$
		for $v \in \Lambda_2$.}
	\label{fig:Boxes}
\end{figure}
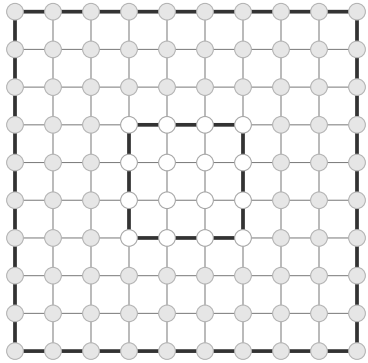
\begin{equation}
	\label{eq:defGLj}
	G_{L,\Lambda,\lambda}^{\tau} (\bx) :=
	\frac{1}{\lambda} \log \langle \bx^{I \cap \Lambda}\rangle^{\tau,\bx_{\Lambda^c}}_{\Lambda_L,\lambda} \,,
\end{equation}
where the expectation is taken with respect to the finite-volume
hard-core measure on $\Lambda_L$, with boundary conditions $\tau \in
	\{e,o\}$ and $\bx_{\Lambda^c}$ denotes the field ``switched off"
inside $\Lambda$ as defined in \cref{eq:switch-off}. See
Figure~\ref{fig:Boxes} {for an illustration of the geometric setup
	in this definition (boundary conditions not illustrated).}
Similarly, we define the finite-volume analogue of \cref{eq:defF} by
\begin{align}
	\label{eq:defFL}
	F_{L,\Lambda,\lambda}(\bx)=F_{L,\Lambda,\lambda}(\bx_{\Lambda^c})
	:=
	\mathbb{E}[G^{e}_{L,\Lambda,\lambda}(X)
		-
		G^{o}_{L,\Lambda,\lambda}(X)
		\mid
		(X_v)_{v\in \Lambda}=(x_v)_{v\in \Lambda}] \,.
\end{align}

\begin{proof}[Proof of \cref{lem:bound-conditional}] Recall the notation $\bx^{S}:= \prod_{v\in S} x_v$ for a vector $\bx$, which will be used throughout the proof.
	Before working with the finite-volume observables defined above, we first justify the passage to the thermodynamic limit.
	Since $\Lambda_L \uparrow \ZZ^2$, \cref{th:InfiniteVol-PhaseTrans}
	yields the convergence of the measures
	\[
		\mu^{\tau,\bx_{\Lambda^c}}_{\Lambda_L,\lambda}\to \mu^{\tau,\bx_{\Lambda^c}}_{\ZZ^2,\lambda} \text{ as } L \to \infty \,,
	\]
	for
	any fixed $\Lambda$, $\bx$ non-negative and $\tau\in\{e,o\}.$
	As the
	function $\bx^{I \cap \Lambda}$ is local, for any boundary condition
	$\tau\in \{e,o\}$, $\lambda,\Lambda, \bx$, we therefore have that
	\[
		G_{\Lambda,\lambda}^{\tau}(\bx)=\lim_{L\to\infty}G_{L,\Lambda,\lambda}^{\tau}(\bx).
	\]
	Moreover,  for $\tau\in \{e,o\}$
	and $\lambda, \Lambda, (x_v)_{v\in\Lambda}$ fixed,
	note that $\bx^{I\cap \Lambda} \leq (1\vee \bx)^{\Lambda}$ for any independent set $I$
	and
	$\mu_{\Lambda_L,\lambda}^{\tau, \bx_{\Lambda^c}}(I\cap \Lambda=\emptyset) \leq  \langle \bx^{I \cap \Lambda}\rangle^{\tau,\bx_{\Lambda^c}}_{\Lambda_L,\lambda}$.
	As a consequence,
	\begin{equation}\label{eq:supG}
		\sup\nolimits_{L,\bx|_{\Lambda^c}}|G_{L,\Lambda, \lambda}^{\tau}(\bx)|\le \frac{1}{\lambda}\sum_{v\in \Lambda} \log(1\vee x_v)+\frac{1}{\lambda}\sup\nolimits_{L,\bx_{\Lambda^c}}\, \big|\log\mu_{\Lambda_L,\lambda}^{\tau, \bx_{\Lambda^c}}(I\cap \Lambda=\emptyset)\big| <\infty,
	\end{equation}
	where the supremum on the right is finite by~\eqref{eq:basic_bound}.
	Thus, by bounded convergence
	\[
		\lim_{L\to \infty} F_{L,\Lambda,\lambda}(\bx|_\Lambda)
		=
		\EE\left[G^{e}_{\Lambda,\lambda}(X)-G^{o}_{\Lambda,\lambda}(X)\mid
		(X_v)_{v\in \Lambda}=(x_v)_{v\in \Lambda} \right]
		=F_{\Lambda,\lambda}(\bx|_\Lambda).
	\]

	As a consequence of the above, it suffices to find $c_\lambda<\infty$
	such that
	\begin{equation}
		\label{eq:FboundFV}
		|F_{L,\Lambda_j,\lambda}(\bx)|\le c_\lambda|\partial
		\Lambda_j| \text{ for all } j<L.
	\end{equation}

	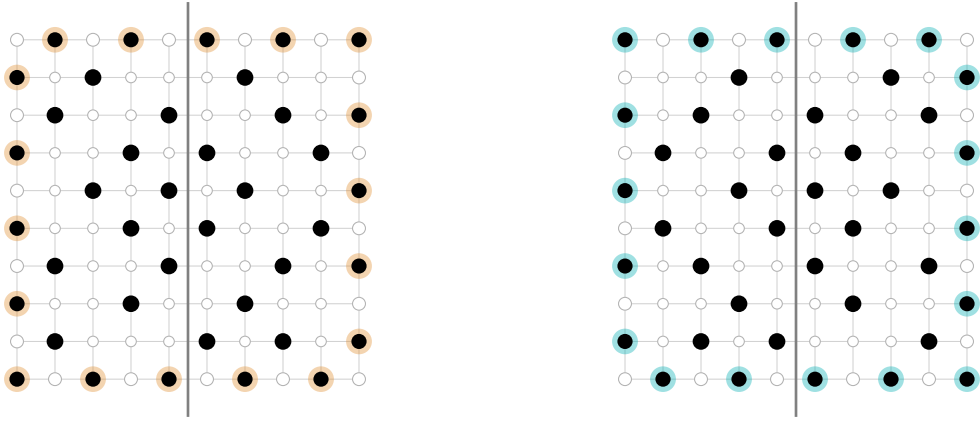
\begin{figure}
		\centering
		\begin{center}
			\begin{tikzpicture}[scale=0.5]
				\def\rEmpty{0.14} \def\rBdry{0.20}  \def\rOcc{0.22}   \def\rBdryEmpty{0.17}

				\colorlet{evenHalo}{BurntOrange!90!black}
				\colorlet{oddHalo}{TealBlue}
				\def\haloScale{1.7}

				\begin{scope}[xshift=0cm]

					\draw[thin, gray!35] (0,0) grid (9,9);
					\draw[line width=1pt, gray] (4.5,-1) -- (4.5,10);

					\foreach \i in {0,...,9}{
							\foreach \j in {0,...,9}{
									\draw[gray!60, fill=white] (\i,\j) circle (\rEmpty);
								}
						}

					\foreach \X in {0,9}{
							\foreach \Y in {1,...,8}{
									\pgfmathtruncatemacro\iseven{mod(\X+\Y,2)==0}
									\ifnum\iseven=1
										\fill[evenHalo, opacity=.38] (\X,\Y) circle (\haloScale*\rBdry);
										\fill[black] (\X,\Y) circle (\rBdry);
									\else
										\draw[gray!60, fill=white] (\X,\Y) circle (\rBdryEmpty);
									\fi
								}
						}
					\foreach \Y in {0,9}{
							\foreach \X in {0,...,9}{
									\pgfmathtruncatemacro\iseven{mod(\X+\Y,2)==0}
									\ifnum\iseven=1
										\fill[evenHalo, opacity=.38] (\X,\Y) circle (\haloScale*\rBdry);
										\fill[black] (\X,\Y) circle (\rBdry);
									\else
										\draw[gray!60, fill=white] (\X,\Y) circle (\rBdryEmpty);
									\fi
								}
						}

					\foreach \x/\y in {
							1/1,1/3,1/7,2/5,2/8,
							3/2,3/4,3/6,4/3,4/5,4/7,
							5/1,5/4,5/6,6/2,6/5,6/8,
							7/1,7/3,7/7,8/4,8/6
						}{
							\fill[black] (\x,\y) circle (\rOcc);
						}
				\end{scope}

				\begin{scope}[xshift=16cm]

					\draw[thin, gray!35] (0,0) grid (9,9);
					\draw[line width=1pt, gray] (4.5,-1) -- (4.5,10);

					\foreach \i in {0,...,9}{
							\foreach \j in {0,...,9}{
									\pgfmathtruncatemacro\xpos{9-\i}
									\draw[gray!60, fill=white] (\xpos,\j) circle (\rEmpty);
								}
						}

					\foreach \X in {0,9}{
							\foreach \Y in {1,...,8}{
									\pgfmathtruncatemacro\isodd{mod(\X+\Y,2)==1}
									\ifnum\isodd=1
										\fill[oddHalo, opacity=.38] (\X,\Y) circle (\haloScale*\rBdry);
										\fill[black] (\X,\Y) circle (\rBdry);
									\else
										\draw[gray!60, fill=white] (\X,\Y) circle (\rBdryEmpty);
									\fi
								}
						}
					\foreach \Y in {0,9}{
							\foreach \X in {0,...,9}{
									\pgfmathtruncatemacro\isodd{mod(\X+\Y,2)==1}
									\ifnum\isodd=1
										\fill[oddHalo, opacity=.38] (\X,\Y) circle (\haloScale*\rBdry);
										\fill[black] (\X,\Y) circle (\rBdry);
									\else
										\draw[gray!60, fill=white] (\X,\Y) circle (\rBdryEmpty);
									\fi
								}
						}

					\foreach \x/\y in {
							1/1,1/3,1/7,2/5,2/8,
							3/2,3/4,3/6,4/3,4/5,4/7,
							5/1,5/4,5/6,6/2,6/5,6/8,
							7/1,7/3,7/7,8/4,8/6
						}{
							\pgfmathtruncatemacro\xpos{9-\x}
							\fill[black] (\xpos,\y) circle (\rOcc);
						}

				\end{scope}
			\end{tikzpicture}
		\end{center} 	\caption{{Two independent sets on $\Lambda_4$ with boundary conditions specified on $\Lambda_5$.
					On the left, the configuration has even boundary conditions (orange).
					On the right, the reflected configuration
					has odd boundary conditions (blue).}}\label{fig:flip}
	\end{figure}

	\noindent To show \eqref{eq:FboundFV}, observe that for any activity field $\mathbf{y}$, by definition,
	\begin{equation}\label{eq:RN}
		G_{L,{\Lambda_{j}},\lambda}^\tau(\mathbf{y})=\frac{1}{\lambda}\big(\!\log{Z^\tau_{L,\lambda}(\mathbf{y})}-\log {Z^\tau_{L,\lambda}(\mathbf{y}_{\Lambda_j^c})}\big) \,,
	\end{equation}
	and hence
	\begin{equation}
		\label{eq:Fexpression}
		F_{L,\Lambda_j,\lambda}(\mathbf{x})=\frac{1}{\lambda} \mathbb{E}\left[\log \frac{Z^{e}_{L,\lambda}(X)Z^{o}_{L,\lambda}(X_{\Lambda_j^c})}{Z^{o}_{L,\lambda}(X)Z^{e}_{L,\lambda}(X_{\Lambda_j^c})} \; \Big| \;  (X_v)_{v\in \Lambda_j}=(x_v)_{v\in \Lambda_j}\right].
	\end{equation}
	Since the activity variables are i.i.d., the conditional expectation above amounts to fixing $X_v=x_v$ for $v\in \Lambda_j$ and taking expectation over $(X_v)_{v\in \Lambda_L\setminus \Lambda_j}$.
		{In the remainder of this paragraph we describe the key idea in estimating
			this expectation; more precise details then follow.} The approach is to apply a transformation $\phi$ that keeps vertices inside $\Lambda_{j+1}$
	fixed, and reflects $\Lambda_L \setminus \Lambda_{j+1}$ across $x=1/2$; see Figure \ref{fig:flip} for an illustration.
	This swaps the parity of boundary conditions on $\partial \Lambda_L$ and yields an almost one-to-one correspondence between independent sets, with potential conflicts confined to the annulus $\Lambda_{j+1}\setminus \Lambda_j$.
	It follows readily that for any non-negative activity $\mathbf{y}$
	and $\{\tau,\tau'\}=\{e,o\}$ or $\{o,e\}$
	\begin{equation}
		\label{eq:PFevenodd1}
		Z_{L,\lambda}^{\tau} (\mathbf{y})
		\leq
		(1 + \lambda \mathbf{y} )^{\Lambda_{j+1}\setminus \Lambda_j} \, Z_{L,\lambda}^{\tau'} (\mathbf{y}\circ \phi
		)
		\,.
	\end{equation}
	When one then plugs this inequality into \eqref{eq:Fexpression}, since
	an expectation is being taken over the i.i.d.~activity variables
	outside $\Lambda_j$ {and the transformed field $X\circ\phi$ has the same distribution as $X$}, all that remains is a sum of $\EE[\log(1+\lambda X_v)]$ over $v\in \Lambda_{j+1}\setminus \Lambda_j$, which is bounded above by a uniform constant times~$|\partial \Lambda_j|$.

	The remainder of the proof formalises the discussion above. Fix $j$ and $L$, and set
	\[\Lambda_L=A_1\cup A_2\cup A_3 \qquad \text{where} \quad
		A_1:= \Lambda_j, \quad \quad
		A_2:= \Lambda_{j+1}\setminus \Lambda_j ,\quad \text{and} \quad
		A_3 := \Lambda_L \setminus \Lambda_{j+1} \,.
	\]
	and for $I \in \II_{L}$ and $k \in \{ 1,2,3 \}$, set \( I_k=I \cap A_k \) and $\II_k:=\II_{A_k}$.
	To show \eqref{eq:PFevenodd1}, we use this  decomposition to write
	\[
		Z_{\Lambda_L,\lambda}^{\tau} (\mathbf{y})
		=
		\sum_{I \in \II_{L}^{\tau}} (\lambda \mathbf{y})^I
		=
		\sum_{I_1 \in \II_{1}} (\lambda \mathbf{y})^{I_1}
		\sum_{I_3 \in \II_3^{\tau}} (\lambda \mathbf{y})^{I_3}
		\sum_{I_2 \in \II_2(I_1,I_3)} (\lambda \mathbf{y})^{I_2} \,,
	\]
	for $\tau\in \{e,o\}$ where $\II_3^{\tau}$ are the independent sets in $A_3$ compatible with boundary conditions $\tau$ in $\Lambda_L^c$ and $\II_2(I_1,I_3)$ are the independent sets in $A_2$ compatible with $I_1$ in $A_1 = \Lambda_j$ and with $I_3$ in $A_3$.

	We consider the bijection $\phi$ of $\ZZ^2$ described above that reflects sites in $A_3$ via $\theta$ and leaves all other sites fixed:
	\begin{equation*}
		\phi(v)=\phi^{(j)}(v):=
		\begin{cases}
			v,         & \text{if } v \in \Lambda_{j+1} \,,      \\
			\theta(v), & \text{if } v \not\in  \Lambda_{j+1} \,.
		\end{cases}
	\end{equation*}
	The map $\phi$ is not an isometry, so it may fail to preserve independent sets, but this issue can only arise from contributions in $A_2$.
	To deal with this, we use a uniform bound that allows us to “erase’’ $I_2$ and avoid possible conflicts.
	Since \(\II_2(I_1,I_3)\subseteq \mathcal P(A_2)\), we have
	\[
		\sum_{I_2 \in \II_2(I_1,I_3)}  (\lambda \mathbf{y})^{I_2}
		\leq \sum_{S \subseteq A_2} (\lambda \mathbf{y})^S
		=
		(1 + \lambda \mathbf{y})^{A_2} ,
	\]
	and so since everything is positive
	\[
		Z_{L,\lambda}^{\tau} (\mathbf{y})
		\leq
		(1 + \lambda \mathbf{y})^{A_2}
		\sum_{I_1 \in \II_{1}} (\lambda \mathbf{y})^{I_1}
		\sum_{I_3 \in \II_3^{\tau}} (\lambda \mathbf{y})^{I_3} \,.
	\]

	Moreover, since $\phi$ induces a bijection between $\II_3^{\tau}$ and $\II_3^{\tau'}$ (where $\tau'(e):=o$ and $\tau'(o):=e$)
	setting $J_3 :=\phi(I_3)$,
	where $J_3 \in \II_3^{\tau'}$ for every $I_3 \in \II_3^{\tau}$,   we get
	\[
		Z_{L,\lambda}^{\tau} (\mathbf{y})
		\leq
		(1 + \lambda \mathbf{y})^{A_2}
		\sum_{I_1 \in \II_{1}} (\lambda \mathbf{y})^{I_1}
		\sum_{J_3 \in \II_3^{o}} \prod_{u \in J_3} \lambda (\mathbf{y}\circ \phi)_u
		\leq
		(1 + \lambda \mathbf{y} )^{A_2} \, Z_{L,\lambda}^{\tau'} (\mathbf{y}\circ \phi
		)
		\,,
	\]
	where $(\mathbf{y}\circ \phi)_v := \mathbf{y}_{\phi(v)}$. This is exactly \eqref{eq:PFevenodd1}.

	Hence, $\log Z^{\tau'}_{L,\lambda}(\mathbf{y}\circ \phi)- \sum_{v\in
			A_2} \log(1+\lambda y_v)\le \log Z_{L,\lambda}^\tau(\mathbf{y})\le
		\log Z^{\tau'}_{L,\lambda}(\mathbf{y}\circ \phi) +\sum_{v\in A_2}
		\log(1+\lambda y_v)$ for every $\mathbf{y}$ non-negative and
	${\tau,\tau'}=\{e,o\}$ or $\{o,e\}$.  Inserting this bound in
	\eqref{eq:PFevenodd1}, and using that, for $\tau\in \{e,o\}$,
	$\mathbb{E}[\log Z_{L,\Lambda}^\tau(X) \mid (X_v)_{v\in
				\Lambda_j}=(x_v)_{v\in \Lambda_j}]=\mathbb{E}[\log
			Z^{\tau}_{L,\lambda}(X\circ \phi)\mid (X_v)_{v\in
				\Lambda_j}=(x_v)_{v\in \Lambda_j}]$ and $\mathbb{E}[\log
			Z_{L,\Lambda}^\tau(X_{\Lambda_j^c}) \mid (X_v)_{v\in
				\Lambda_j}=(x_v)_{v\in \Lambda_j}]=\mathbb{E}[\log
			Z^{\tau}_{L,\lambda}(X_{\Lambda_j^c}\circ \phi)\mid (X_v)_{v\in
				\Lambda_j}=(x_v)_{v\in \Lambda_j}]$, we obtain
	\begin{equation}
		\label{e:Fbound}
		|F_{L,\Lambda_j,\lambda}(\bx)| \le \frac{2}{\lambda} \sum_{v\in A_2} \mathbb{E}[\log(1+\lambda y_v)] \le c'_\lambda
		|A_2|  \,,
	\end{equation}
	with $c'_\lambda =(2/\lambda)\mathbf{E}[\log(1+\lambda X_v)]$; the
	expectation does not depend on the vertex $v$.
	Since
	$|A_2|\le c |\partial \Lambda_j|$ for a universal constant $c$, this concludes the proof.
\end{proof}

\subsection{Gaussian Domination}
In this section, we prove Lemma~\ref{lem:gaussian-domination}.
\begin{proof}[Proof of \cref{lem:gaussian-domination}]
	Order the vertices of $\mathbb{Z}^2$ lexicographically. For $v\in \mathbb{Z}^2$, $\Lambda \subset \ZZ^2$ finite, set
	\[
		\mathcal{H}^{\le v}=\sigma((X_w)_{w\le v}),\quad
		\mathcal{H}^{\le v}_{\Lambda}=\sigma((X_w)_{w\le v, w\in \Lambda}), \quad \text{and} \quad
		\mathcal{H}_\Lambda=\sigma((X_w)_{w\in \Lambda}),
	\]
	{where $\sigma(A)$ denotes the sigma-algebra generated by a set of
	random variables $A$.}
	Define $\mathcal{H}^{<v}$ and $\mathcal{H}^{<v}_{\Lambda}$ analogously
	to $\mathcal{H}^{\le v}$.  To simplify notation in this proof we will
	work with $\Lambda=[0,2j-1]^2$; the case $\Lambda=\Lambda_j$ follows by translation covariance.

	Let $v_1,\dots, v_{|\Lambda|}$ denote the vertices of $\Lambda$ in
	order; note that $v_1=\oo$ and $v_2=(1,0)=\oo'$. For $1\le i \le |\Lambda|$, set
	\begin{align}
		Y_i & :=
		\EE\left[G_{\Lambda,\lambda}^e(X)-G_{\Lambda,\lambda}^o(X)|\mathcal{H}^{\le v_i}_{\Lambda}\right]
		-\EE\left[G_{\Lambda,\lambda}^e(X)-G_{\Lambda,\lambda}^o(X)|\mathcal{H}^{<
		v_i}_{\Lambda}\right] \nonumber                                                                                                                                  \\
		    & = \EE\left[F_{\Lambda,\lambda}|\mathcal{H}_{\Lambda}^{\le v_i}\right]-\EE\left[F_{\Lambda,\lambda}|\mathcal{H}_{\Lambda}^{< v_i}\right] \,, \label{eq:YiF}
	\end{align}
	by the tower law. {For future use, note that
			$F_{\Lambda,\lambda}(X) = \sum_{i=1}^{|\Lambda|} Y_i$.}
	Set
	\[
		W_i:=\EE\left[G_{\Lambda,\lambda}^e(X)-G_{\Lambda,\lambda}^o(X)|\mathcal{H}^{\le v_i}_{}\right]-\EE\left[G_{\Lambda,\lambda}^e(X)-G_{\Lambda,\lambda}^o(X)|\mathcal{H}^{< v_i}_{}\right].
	\]
	Notice that for any vertex $u$, since  $\EE[G_{\Lambda,\lambda}^e(X)-G_{\Lambda,\lambda}^o(X)|\mathcal{H}^{\le
			u}]$ is a function of $(X_w)_{w\le u}$ only, its conditional
	expectation given $(X_w)_{w\in \Lambda}$ is the same as its
	conditional expectation given $(X_w)_{w\in \Lambda, w\le u}$. A
	tower law computation using the definition of $Y_{i}$ thus gives
	\begin{equation}
		\label{eq:YW}
		Y_i = \mathbb{E}[W_i \mid \mathcal{H}_\Lambda].
	\end{equation}

	The key observation about the $W_i$ is the following.
	For $x\in [0,\infty)$, let $X^{v,x}$ denote the field obtained from replacing the value $X_v$ of $X$ at site $v$ by $x$.
	Let $w=v_i$ for some $i$, we have
	\begin{equation}
		\label{eq:IntGe-Go}
		\begin{aligned}
			G_{\Lambda,\lambda}^e(X)-G_{\Lambda,\lambda}^o(X)
			 & = \int_{-\infty}^{\log X_{w}} \frac{\partial}{\partial
				\log({x_w})}
			\left(G_{\Lambda,\lambda}^e\left(X^{w,z}\right)-G_{\Lambda,\lambda}^o\left(X^{w,z}\right) \right) \,dy \qquad \text{where $z=e^y$ } \\
			 & = \int_{-\infty}^{\log X_{w}}
			\mu_{\ZZ^2,\lambda}^{e, X^{w,z}}(w\in I)-\mu_{\ZZ^2,\lambda}^{o, X^{w,z}}(w\in I)  \,dy                                             \\
			 & = \int_{0}^{X_{w{}}}
			\mu_{\ZZ^2,\lambda}^{e, X^{w,x}}(w\in I)-\mu^{o, X^{w,x}}_{\ZZ^2,\lambda}(w\in I) \, \frac{dx}{x}.
		\end{aligned}
	\end{equation}
	By \eqref{eq:basic_bound}
	$\mu_{\ZZ^2,\lambda}^{\tau, X^{w,x}}(w\in I)\le \lambda x$,
	so this integral is well defined near $0$.
	Let
	\[
		g_w ((X_v)_{v<w},x)
		= \EE\left[\mu_{\ZZ^2,\lambda}^{e, X^{w,x}}(w\in I) -
		\mu_{\ZZ^2,\lambda}^{o, X^{w,x}}(w\in I) \,  \Big| \,
		{\mathcal{H}^{<w}}
		\right]\,.
	\]
	In other words, $g_w ((X_v)_{v<w},x)$ is the expectation of the
	difference
	in the occupation probabilities of $w$ when we fix $X_w=x$, given the
	field up until vertex $w$, {under the even and odd measures}.
	With this notation we can express
	\[
		W_i
		= W_i((X_v)_{v<v_i},X_{v_i}))
		= \int_0^{X_{v_i}} g_{v_i}((X_v)_{v<v_i},x) \frac{dx}{x}
		- \mathbf{E} \left[ \int_0^{Z} g_{v_i}((X_v)_{v<v_i},x) \frac{dx}{x} \right] \,,
	\]
	where $Z$ is an independent copy of $X_{v_i}$ and $\mathbf{E}$ denotes its law.
	In particular, notice that $g_{v_i}$ does not depend on $\Lambda$, and so neither does $W_i$.
	Using \eqref{eq:basic_bound} again, we have that $|g_{v_i}((X_v)_{v<v_i},x)| \le 2\lambda x$ for all $v_i, (X_v)_{v<v_i}$ which implies that
	\begin{equation}
		\label{eq:Wbound}
		|W_i|\le 2\lambda (X_{v_i}+\mathbb{E}[X_{v_i}]) \,,
	\end{equation} for all $i$, since $g_{v_i}$ has a fixed sign determined by the parity of $v_i$.
	Moreover, let $v_j = v_i + \ba$, where the vector $\ba$ has even parity.
	Then, by the translation covariance of the expectations inside $g_{v_i}$, we have
	\[
		g_{v_i + \ba} ((X_v)_{v<v_i+\ba}, X_{v_i+\ba})
		=
		g_{v_i} \Big( ((T_{\ba}X)_v)_{v<v_i}, (T_{\ba}X)_{v_i} \Big) \,,
	\]
	where $(T_{\ba}X)_v = X_{T_{\ba}v} = X_{v+ \ba}$.
	Therefore,
	\begin{equation}
		\label{eq:Wshift} W_j(X) = W_i(T_{\ba}X).
	\end{equation}

	The remainder of this proof is divided in several steps.

	\subsubsection*{Step 1}
	We first show that $\EE [F_{\Lambda,\lambda} (X)]=0$ for any $j \ge 0$.
	By \eqref{eq:supG} and our conditions on the law of the field, $G^{\tau}_{\Lambda,\lambda}(X)$ is integrable for $\tau\in \{e,o\}$.
	Hence $\EE[G^{\tau}_{\Lambda,\lambda}(X)]$ is well-defined.
	Let $\ba=(1,0)$.
	By the translation covariance property \eqref{eq:tranlationCov} of the infinite volume measures, we have
	\[
		G^{o}_{\Lambda,\lambda}(X)
		=
		G^{e}_{\Lambda+\ba,\lambda}(T_{\ba}X) \,,
	\]
	where we have used that $T_{\ba}o=e$ for $\ba$ with odd parity.
	Since $T_{\ba}X$ and $X$ have the same distribution we obtain that
	\[
		\EE [G^{o}_{\Lambda,\lambda}(X)]
		=
		\EE [G^{e}_{\Lambda + \ba,\lambda}(T_{\ba}X)]
		=
		\EE[G^{e}_{\Lambda+\ba,\lambda}(X)]\,.
	\]
	To show that $\EE[F_{\Lambda, \lambda}(X)]=0$ it thus suffices to establish
	\[\EE[G^{e}_{\Lambda+\ba,\lambda}(X)]=\EE[G^{e}_{\Lambda,\lambda}(X)]\,.
	\]
	To verify this we use the definition of $G$ and of infinite volume
	Gibbs measures to write
	$\exp(\lambda G^e_{\Lambda}(X))=\lim_{L\to \infty} \langle X^{I\cap
				\Lambda} \rangle^{e, X_{\Lambda^c}}_{B(L,j),\lambda}$, where $B(L,j)$
	is the box $[-2L+j,2L+j]\times [-2L,2L]$. Letting $Y(v)=X(\theta(v))$,
	where $\theta$ is reflection in the line $\{x=j\}\subset \ZZ^2$, we
	also have
	\begin{equation}\label{eq:XY}
		\langle X^{I\cap \Lambda} \rangle^{e, X_{\Lambda^c}}_{B(L,j),\lambda}=\langle Y^{I\cap (\Lambda+\ba)} \rangle^{e, Y_{(\Lambda+\ba)^c}}_{B(L,j),\lambda} \,,
	\end{equation}
	for every fixed $L$. The limits on either side of \eqref{eq:XY} as
	$L\to \infty$ therefore agree. The limit on the right-hand side is
	$\exp(\lambda G_{\Lambda+\ba}^{e}(Y))$. Since $Y$ and $X$ have the
	same law (under $\mathbb{P}$), this implies that $\exp(\lambda
		G_{\Lambda}^e(X))$ has the same law as $\exp(\lambda
		G_{\Lambda+\ba}^e(X))$. In particular, the same holds for
	$G_{\Lambda}^e(X)$ and $G_{\Lambda+\ba}^e(X)$, and the claim concerning their expectations follows.

	\subsubsection*{Step 2}
	Set $F_\Lambda := F_{\Lambda,\lambda}(X) = \sum_{i=1}^{|\Lambda|} Y_i$.
	By~\eqref{eq:YiF} and successively conditioning,
	\[
		\mathbb{E}\left[\,e^{\frac{tF_{\Lambda}}{\sqrt{|\Lambda|}}\,}\right]=\mathbb{E}\left[ \,\prod_{i=1}^{|\Lambda|}
			\mathbb{E}\left[\,
				e^{\frac{tY_i}{\sqrt{\Lambda}}}\Big| \mathcal{H}_\Lambda^{<v_i}
				\right]\,
			\right].
	\]
	{This, when combined with the mean zero property established in
	Step~1, has the following consequence.} For any $a>0$, and for an (explicit) function $f\colon
		[0,\infty)\to [0,1]$ with $f(a)\downarrow 0$ as $a\downarrow 0$ (see~\cite[(7.53)]{Bovier06}),
	\begin{equation}
		\label{eq:lapbound}
		\mathbb{E}\left[
			\,
			e^{\frac{tF_{\Lambda}}{\sqrt{|\Lambda|}}}
			\,
			\right]\ge \EE\left[\exp\left(\frac{t^2(1-f(a))}{2|\Lambda|}\sum_{i=1}^{|\Lambda|}
			\mathbb{E}\left[Y_i^2 \mathds{1}_{t|Y_i|\le
				a\sqrt{|\Lambda|}}\,\Big|\,
				\mathcal{H}_\Lambda^{<v_i}\right]\right)\right].
	\end{equation}
	\subsubsection*{Step 3}
	In this step, we prove that, for any $a>0$,
	\[
		\frac{1}{|\Lambda|}\sum_{i=1}^{|\Lambda|}
		\mathbb{E}\left[Y_i^2 \mathds{1}_{t|Y_i|\ge a\sqrt{|\Lambda|}} \,\Big|\, \mathcal{H}_\Lambda^{<v_i}\right] \stackrel{\PP}{\longrightarrow} 0 \,,
	\]
	as $\Lambda \uparrow \mathbb{Z}^2$.
	Pick $p>1$ such that $\mathbb{E}[X_v^{2p}]<\infty$, and set
	$q$ such that $1/q+1/p=1$. Then
	\begin{align*}
		\EE \left[ \frac{1}{|\Lambda|} \sum_{i=1}^{|\Lambda|}\EE [Y_i^2 \mathds{1}_{t|Y_i|\ge a \sqrt{|\Lambda|}} \Big| \mathcal{H}_\Lambda^{<v_i} ] \right]
		 & =
		\frac{1}{|\Lambda|} \sum_{i=1}^{|\Lambda|} \mathbb{E}[Y_i^2 \mathds{1}_{t|Y_i| \ge a\sqrt{|\Lambda|}}]                 \\
		 & \le
		\frac{1}{|\Lambda|} \sum_{i=1}^{|\Lambda|} \mathbb{E}[Y_i^{2p}]^{1/p} \mathbb{P}({t|Y_i| \ge a\sqrt{|\Lambda|}})^{1/q} \\
		 & \le
		\frac{1}{|\Lambda|} \sum_{i=1}^{|\Lambda|} \mathbb{E}[W_i^{2p}]^{1/p} \left( \frac{t\mathbb{E}[|W_i|]}{a\sqrt{|\Lambda|}}\right)^{1/q} \,,
	\end{align*}
	where in the last inequality we have used \eqref{eq:YW} and that the conditional expectation is a contraction in $L^{r}$ for $r\ge 1$.
	By \eqref{eq:Wbound} and since the variables $X_v$ are i.i.d., we have uniform bounds
	$\EE[|W_i|^{2p}]^{1/p} \leq (2\lambda)^2 (\EE[X_v^{2p}]^{1/p}+\mathbb{E}[X_v]^2)$
	and
	$\EE[|W_i|] \leq 4\lambda \EE[X_v]$ for all $i$.
	Substituting these into the last expression gives a term of order $|\Lambda|^{-1/(2q)}$, which  converges to $0$ as $|\Lambda|\to \infty$.
	\subsubsection*{Step 4}
	In this step, we apply an appropriate ergodic theorem to show that
	\[
		\frac{1}{|\Lambda|} \sum_{i=1}^{|\Lambda|} \EE [W_i^2 \mid \mathcal{H}^{<v_i}]
		\stackrel{\PP}{\longrightarrow}
		\frac12 \left(\mathbb{E}[W_{1}^2]+\mathbb{E}[W_{2}^2]\right)=:b^2 \text{ as } \Lambda \uparrow \ZZ^2 \,,
	\]
	and show that $b^2$ satisfies \eqref{eq:b-dominates-expected-value}.

	Recall that $\oo=(0,0)=v_1$ and $\oo'=(0,1)=v_2$. Let
	$f(X)=\mathbb{E}[W_1^2 |\mathcal{H}^{<v_1}]$. By \eqref{eq:Wshift}, for $v_i$ even,
	\[ \mathbb{E}[W_i^2|\mathcal{H}^{<v_i}] = f(T_{v_i}(X)),\]
	and similarly for $v_i$ odd.
	Therefore, we can (using that $(X_v)_v$ are i.i.d.) apply the Tempel'man Ergodic Theorem (\cite[Theorem 2.8]{krengel2011ergodic}, for instance) separately to even and odd sites to obtain that
	\begin{equation}
		\begin{aligned}
			\frac{1}{|\Lambda|} \sum_{i=1}^{|\Lambda|} \EE [W_i^2 \mid \mathcal{H}^{<v_i}]
			 & =
			\frac{1}{|\Lambda|} \sum_{v_i \in \Lambda \cap e } \EE [W_i^2 \mid \mathcal{H}^{<v_i}]
			+
			\frac{1}{|\Lambda|} \sum_{v_i \in \Lambda \cap o} \EE [W_i^2 \mid \mathcal{H}^{<v_i}] \\
			 & \stackrel{\PP}{\longrightarrow}
			\frac12 \left(\mathbb{E}[W_{1}^2]+\mathbb{E}[W_{2}^2]\right)=:b^2 \text{ as } \Lambda \uparrow \ZZ^2 \,.
		\end{aligned}
	\end{equation}
	Moreover, by (conditional) Jensen's inequality, we have
	\[
		\mathbb{E}[W_1^2]\ge \mathbb{E}[\mathbb{E}[W_{1}|X_{\oo}]^2]=\mathbb{E}[\mathbb{E}[F_{\Lambda}|X_{\oo}]^2] \,,
	\]
	where the equality follows since (omitting subscripts and arguments of
	$G$)
	\[\mathbb{E}[W_1|X_{\oo}]=\mathbb{E}[\EE[G^e-G^o|\mathcal{H}^{\le
				\oo}]|X_{\oo}]=\mathbb{E}[G^e-G^o | X_{\oo}]=\EE[ \EE[ G^e-G^o |
			\mathcal{H}_\Lambda]|X_{\oo}]=\mathbb{E}[F_\Lambda|X_{\oo}] \,, \]
	where we have used the tower law. {To obtain the desired lower bound on $b^{2}$ we will
	argue similarly to show $\mathbb{E}[W_2^2]\ge
		\mathbb{E}[\mathbb{E}[F_\Lambda|X_{\oo'}]^2]$. Indeed, since
	$X_{\oo'}$ is independent of $\mathcal{H}^{\le \oo}$,
	\begin{align*}
		\mathbb{E}[W_2|X_{\oo'}]
		 & =\mathbb{E}[\EE[G^e-G^o|\mathcal{H}^{\le \oo'}]|X_{\oo'}]-\mathbb{E}[\EE[G^e-G^o|\mathcal{H}^{\le \oo}]|X_{\oo'}] \\
		 & =\mathbb{E}[G^e-G^o | X_{\oo'}]-\mathbb{E}[G^{e}-G^{o}]                                                           \\
		 & =\EE[ \EE[ G^e-G^o | \mathcal{H}_\Lambda]|X_{\oo'}]=\mathbb{E}[F_\Lambda|X_{\oo'}].
	\end{align*}
	as the second term in the second line is zero by Step 1.}

	\subsubsection*{Step 5}
	In this (technical) step we show that
	\begin{equation}\label{eq:step5}
		\frac{1}{|\Lambda|}\sum_{i=1}^{|\Lambda|}\left(\mathbb{E}[Y_i^2 \mid \mathcal{H}_\Lambda^{<v_i}]-\mathbb{E}[W_i^2\mid \mathcal{H}^{<v_i}]\right)
		\stackrel{\PP}{\longrightarrow} 0 \text{ as } \Lambda \uparrow \ZZ^2 \,.
	\end{equation}
	First, since $Y_i= \EE[W_i | \mathcal{H}_{\Lambda}]$ by~\eqref{eq:YW} and $\mathcal{H}^{<v_i}_{\Lambda} \subset \mathcal{H}^{<v_i}$, we have
	\begin{align}
		\EE \Big[\, \Big| \, \EE[Y_i^2 \mid \mathcal{H}_\Lambda^{<v_i}] -\EE[W_i^2\mid \mathcal{H}^{<v_i}]\, \Big| \,\Big]
		 & =
		\EE \Big[\, \Big| \, \mathbb{E}[Y_i^2-W_i^2|\mathcal{H}^{<v_i}] \, \Big| \,\Big] \nonumber \\
		 & \le
		\EE \Big[\, \EE[|Y_i-W_i||Y_i+W_i|\Big| \mathcal{H}^{<v_i}]\,\Big] \nonumber               \\
		 & =
		\EE \big[\, |Y_i-W_i||Y_i+W_i| \, \big] \nonumber                                          \\
		\label{eq:Step5a}
		 & \le
		\EE[(Y_i-W_i)^2]^{1/2} \EE[(Y_i+W_i)^2]^{{1/2}} \,.
	\end{align}
	We will show that the second expectation in the final expression above
	is uniformly bounded in $i$, while the first converges to $0$ as the
	distance of $v_i$ from the coordinate axes grows. The intuition for
	the second point is that $Y_i$ is a conditional expectation of $W_i$
	given $\mathcal{H}_\Lambda$; in other words, to get $W_i$ from $Y_i$
	we are just taking an expectation over the randomness coming from
	$(X_w)_{w\in \mathbb{Z}^2\setminus \Lambda, w\le v_i}$, which makes
	little difference if $v_i$ is far from both axes. Since as
	$\Lambda\uparrow \Z^2$ the proportion of vertices in $\Lambda$ that
	are close to the coordinate axes goes to $0$, we obtain that the sum
	in \eqref{eq:step5} converges to $0$. {The formal details of these
			three steps follow.}

	For uniform boundedness of $\EE[(Y_i+W_i)^2]$, we use \eqref{eq:YW}
	and the fact that the conditional expectation is a contraction in
	$L^2$ to see that
	\begin{equation}\label{eq:sum2bound}
		\EE [(Y_i+W_i)^2]
		\leq 4 \EE [W_i^2]
		\leq 64\lambda^2 \EE [X_{v_i}^2] =: c^2 < \infty\,,
	\end{equation}
	where for the last inequality we have used that $(X_{v_i})_i$ are i.i.d.\ together with \eqref{eq:Wbound}.

	To control $\EE [(Y_i-W_i)^2]$, using \eqref{eq:YW} again, we have
	\[
		\EE[(Y_i-W_i)^2] =
		\EE \big[ \big( W_i-\EE[W_i|\mathcal{H}_{\Lambda}]\big)^2 \big] \,,
	\]
	which can be expressed in terms of $W_1$ or $W_2$, depending on the
	parity of $v_i$.  Indeed, by \eqref{eq:Wshift}, for all $i$ we can
	express $W_i$ in terms of a fixed site and a shift in the following
	way:
	\begin{equation*}
		W_i(X) =
		\begin{cases}
			W_1 (T_{v_i}X)       & \text{ if $v_i$ is even} \,, \\
			W_2 (T_{v_i-(1,0)}X) & \text{ if $v_i$ is odd} \,.
		\end{cases}
	\end{equation*}
	Therefore,
	defining $\mathcal{H}_{\Lambda - v} = \sigma (X_w : w+v \in \Lambda )$,
	we have
	\begin{equation}
		\EE[(Y_i-W_i)^2] =
		\begin{cases}
			\EE [ (W_1-\EE[W_1|\mathcal{H}_{\Lambda - v_i}])^2]       & \text{ if $v_i$ is even} \,, \\
			\EE [ (W_2-\EE[W_2|\mathcal{H}_{\Lambda - v_i+(1,0)}])^2] & \text{ if $v_i$ is odd} \,.
		\end{cases}
	\end{equation}
	As $W_1$ is measurable with respect to $\mathcal{H}^{\leq \oo} = \sigma( X_v \, : \, v \leq \oo )$,
	for $v=(v_x,v_y)$ even we have
	\[
		\EE [ (W_1-\EE[W_1|\mathcal{H}_{\Lambda - v}])^2]
		=
		\EE [ (W_1-\EE[W_1| \, \sigma(X_{w} : w \leq \oo \text{ and } w \in \Lambda-v )])^2] \,.
	\]
	The same holds for $v$ odd, replacing $W_1$ and $\oo$ by $W_2$ and
	$\oo'=(1,0)$, respectively. Now, define
	\[
		\mathcal{H}_v^{\leq \oo}
		=
		\sigma (X_w \, : \, w \leq \oo \text{ and } w \in [-v_x,0]\times[-v_y, 0]) \,,
	\]
	which increases to $\mathcal{H}^{\leq \oo}$ as $m(v):=\min(v_x,v_y) \to \infty$ and notice that $ \mathcal{H}_v^{\leq \oo} \subseteq \sigma(X_{w} : w \leq \oo \text{ and } w \in \Lambda-v ) $.
	Then, since $W_1$ is $\mathcal{H}^{\leq 0}$-measurable and in $L^2(\mathbb{P})$, for $v$ even, as $m(v) \to \infty$, we have
	\begin{equation*}
		\EE [ (W_1-\EE[W_1|\sigma(X_{w} : w \leq \oo \text{ and } w \in \Lambda-v )])^2] \leq
		\EE[ (W_1 -\EE [W_1 | \mathcal{H}_v^{\leq \oo}] )^2] \to 0 \,,
	\end{equation*}
	Applying the same reasoning with $W_2$ and the past up to $\mathbf{o'} = (1,0)$ yields the same conclusion for odd $v$.
	Thus, we deduce that
	\begin{equation}
		\label{eq:cvYWi}
		\mathbb{E}[(Y_i-W_i)^2] \to 0 \,, \quad \text{ as } m(v_i)\to \infty \,.
	\end{equation}

	To prove~\eqref{eq:step5}, fix $\delta>0$.
	We split the sum into boundary and interior parts by choosing $R$
	sufficiently large so that $\mathbb{E}[(Y_i-W_i)^2] \le \delta$ for
	all $v_i$ such that $m(v_i)>R$; this is possible by
	\eqref{eq:cvYWi}. Then, by \eqref{eq:Step5a} and~\eqref{eq:sum2bound}
	\begin{align*}
		\mathbb{E} & \left[\left|\frac{1}{|\Lambda|}\sum_{i=1}^{|\Lambda|}\left(\mathbb{E}[Y_i^2 \mid \mathcal{H}_\Lambda^{<v_i}]-\mathbb{E}[W_i^2\mid \mathcal{H}^{<v_i}]\right)\right|\right]
		\le \frac{c}{|\Lambda|}\sum_{i=1}^{|\Lambda|} \mathbb{E}[(Y_i-W_i)^2]^{1/2}                                                                                                                \\
		           & = \frac{c}{|\Lambda|}\sum_{i: v_i\in \Lambda, m(v_i)\le R} \mathbb{E}[(Y_i-W_i)^2]^{1/2}+\frac{c}{|\Lambda|}\sum_{i: v_i\in \Lambda, m(v_i)>R} \mathbb{E}[(Y_i-W_i)^2]^{1/2}.
	\end{align*}
	In the final expression, the second term is less than $c\delta$ by
	choice of $R$, and the first term is at most $c\delta$ for $\Lambda$
	big enough, since the proportion of vertices $v$ in $\Lambda$ with
	$m(v)<R$ goes to $0$ and we have $\EE[(Y_i-W_i)^2] \leq c^2$ for all
	$i$ by the same argument used to prove \eqref{eq:sum2bound}.
	As $\delta>0$ is arbitrary, we get the desired convergence (uniformly in $\Lambda$).

	\subsubsection*{Step 6} Combining Steps $3, 4$ and $5$ we see that for any $a>0$
	\[
		\frac{1}{|\Lambda|}\sum_{i=1}^{|\Lambda|}\mathbb{E}[Y_i^2 \mathds{1}_{t|Y_i|\le a\sqrt{|\Lambda|}}\mid \mathcal{H}_\Lambda^{<v_i}]\stackrel{\PP}{\longrightarrow}  b^2
	\]
	as $\Lambda \nearrow \mathbb{Z}^2$. Combining this with
	Step~2 (i.e., \eqref{eq:lapbound}), we see that
	\[
		\liminf_{\Lambda\nearrow \mathbb{Z}^2} \mathbb{E}\left[\exp\left({\frac{tF_\Lambda}{\sqrt{|\Lambda|}}}\right)\right]\ge \exp \Big( \frac{t^2b^2(1-f(a))}{2}\Big) \,.
	\]
	for any $a>0$, and taking $a\to 0$ completes the proof.
\end{proof}

\small
\bibliographystyle{abbrv}
\bibliography{biblio}

\end{document}